\documentclass{amsart}
\pdfoutput=1
\usepackage{amssymb, amsmath, amscd, latexsym, mathrsfs, appendix, mathdots}
\usepackage{graphics}
\usepackage{amsmath}
\usepackage{amsxtra}
\usepackage{amsfonts}
\usepackage{appendix}
\usepackage[all]{xy}

\usepackage{color}


\usepackage{hyperref}
\hypersetup{
    colorlinks,
    citecolor=black,
    filecolor=black,
    linkcolor=black,
    urlcolor=black
}

\vfuzz2pt 
\hfuzz2pt 


\numberwithin{equation}{section}
\newtheorem{thm}{Theorem}[section]
\newtheorem{cor}[thm]{Corollary}
\newtheorem{lem}[thm]{Lemma}
\newtheorem{prop}[thm]{Proposition}

\theoremstyle{definition}
\newtheorem{defn}[thm]{Definition}

\newtheorem{rem}[thm]{Remark}
\newtheorem{expl}[thm]{Example}

\newtheorem{notn}[thm]{Notation}

\newcommand{\lra}{\longrightarrow}

\newcommand{\co}{\colon\!}

\newcommand{\smin}{\smallsetminus}

\newcommand{\id}{\textup{id}}
\newcommand{\im}{\textup{im}}

\newcommand{\map}{\textup{map}}

\newcommand{\config}{\mathsf{con}} 

\newcommand{\cconfig}{\mathsf{ucon}}

\newcommand{\fin}{\mathsf{Fin}}
\newcommand{\cfin}{\mathsf{uFin}}

\newcommand{\skel}{\textup{sk}}

\newcommand{\sA}{\mathcal A}
\newcommand{\sB}{\mathcal B}

\newcommand{\sD}{\mathcal D}

\newcommand{\op}{\textup{op}}

\newcommand{\LL}{\mathbb L}
\newcommand{\FF}{\mathbb F}

\newcommand{\NN}{\mathbb N}
\newcommand{\RR}{\mathbb R}
\newcommand{\ZZ}{\mathbb Z}

\newcommand{\uli}{\underline}

\makeatletter
\DeclareRobustCommand{\rvdots}{%
 \vbox{
   \baselineskip4\p@\lineskiplimit\z@
   \kern-\p@
   \hbox{.}\hbox{.}\hbox{.}
     }}
\makeatother

\begin{document}

\title{Presentations of configuration categories}
\author{Pedro Boavida de Brito and Michael S. Weiss}%

\address{Dept. of Mathematics, Instituto Superior Tecnico, Univ.~of Lisbon, Av.~Rovisco Pais, Lisboa, Portugal}%
\email{pedrobbrito@tecnico.ulisboa.pt}

\address{Math. Institut, Universit\"at M\"{u}nster, 48149 M\"{u}nster, Einsteinstrasse 62, Germany}%
 \email{m.weiss@uni-muenster.de}

\thanks{The project was funded by the Deutsche Forschungsgemeinschaft (DFG, German Research Foundation) – Project-ID 427320536 – SFB 1442,
as well as under Germany’s Excellence Strategy EXC 2044 390685587, Mathematics Münster: Dynamics–Geometry–Structure.
P.B. was supported by FCT 2021.01497.CEECIND and grant SFRH/BPD/99841/2014.}

\subjclass[2000]{57R40, 55U40, 55P48}
\begin{abstract} The configuration category of a manifold is a topological category
which we view as a Segal space, via the nerve construction.
Our main result is that the unordered configuration category, suitably truncated,
admits a finite presentation as a complete Segal space
if the manifold in question is the interior of a compact manifold.
\end{abstract}
\maketitle

\section{Finite presentation properties of complete Segal spaces}
\begin{defn} A \emph{presentation} of a complete Segal space $Y$ is a map
of simplicial spaces $X\to Y$ which is a weak equivalence in the model structure of
complete Segal spaces \cite{Rezk01}. If $X$ is homotopically compact as a simplicial space, we speak of a \emph{finite presentation}.
\end{defn}

By \emph{homotopically compact as a simplicial space} we mean the following. Let $\Delta_{\leq k} \subset \Delta$
be the full subcategory spanned by the objects $[n]$ where $n \leq k$. We
say that $X$ is homotopically compact as a simplicial space if $X_n$ is homotopy equivalent
to a compact CW-space for each $n$, and there is some $k \geq 0$ such that, for every simplicial space $Y$, the induced map
\[
\RR \map(X,Y) \to \RR \map(X|_{\Delta_{\leq k}},Y|_{\Delta_{\leq k}})
\]
is a weak equivalence. The derived mapping spaces are formed in a setting
where a map $X\to Y$ of simplicial spaces counts as a weak equivalence if and only if the maps $X_n\to Y_n$
are weak homotopy equivalences for all $n\ge 0$, respectively, all $n\in\{0,1,\dots,k\}$.

\begin{expl} \label{expl-insur} Suppose that a complete Segal space $Z$ is homotopically compact
as a simplicial space. Then clearly $\id\co Z\to Z$ qualifies as a finite presentation.

Here are three cases of this which are of some interest to us. Let $M$ be the interior of a
compact smooth manifold and let $\alpha$ be a positive integer. Let $\cconfig(M;\alpha)$ be the
unordered and truncated configuration category of $M$, so that only configurations of cardinality $\le \alpha$
are allowed. This is a complete Segal space. It is the unordered variant of $\config(M;\alpha)$ in \cite{BoavidaWeissLong}.
(There are various models available; we authors tend to prefer
the particle model, in which $\cconfig(M;\alpha)$ is the nerve of a topological category, a.k.a. category
object in the category of topological spaces. See \S2.)

The configuration category has a complete Segal subspace ${\cconfig}^{-}\!(M;\alpha)$ obtained 
by allowing all objects, but only those
morphisms whose underlying map of finite sets (finite subsets of $M$) is surjective. It is easy to see that ${\cconfig}^{-}\!(M;\alpha)$ is
homotopically compact as a simplicial space. (As a Segal space in its own right, ${\cconfig}^{-}\!(M;\alpha)$ is popular in factorization homology.)
There is another complete Segal subspace $\cconfig^+\!(M;\alpha)$ of $\cconfig(M;\alpha)$ obtained by allowing
only those morphisms whose underlying map of finite sets is injective. Again, $\cconfig^+\!(M;\alpha)$ is
homotopically compact as a simplicial space. And finally $\cconfig^-\!(M;\alpha)\cap \cconfig^+\!(M;\alpha)$ is homotopically compact as a simplicial space; it is also
essentially constant as a simplicial space. By contrast, $\cconfig(M;\alpha)$ itself is usually not homotopically compact
\emph{as a simplicial space}. But we can still ask whether it is finitely presentable as a Segal space.
\end{expl}

\begin{lem} Let $Y$ be a complete Segal space which is finitely presentable. Let $f\co W\to Y$ be a map
of simplicial spaces, where $W$ is homotopically compact as a simplicial space. Then $f$ admits a
factorization
\[
\xymatrix{
W \ar[r]^-{f_1} & W' \ar[r]^-{f_2} & Y
}
\]
in which $f_1$ is a cofibration, $W'$ is a homotopically compact simplicial space and $f_2$ qualifies as a
presentation of $Y$.
\end{lem}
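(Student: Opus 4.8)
The plan is to reduce to the case where $W$ is a finite cell object of simplicial spaces, and then to take $W'$ to be the mapping cylinder of a suitable map from $W$ into a homotopically compact simplicial space $X''$ that lies between a given finite presentation $X\to Y$ and a fibrant replacement of $X$ in the complete Segal space (CSS) model structure.

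I would begin by recording two facts. \emph{(a)} Every homotopically compact simplicial space $W$ is levelwise equivalent to a \emph{finite cell} simplicial space, that is, one obtained from $\emptyset$ by finitely many pushouts along generating cofibrations: with $k$ as in the definition, the displayed condition says exactly that $W$ is levelwise equivalent to the (derived) left Kan extension along $\Delta_{\leq k}\hookrightarrow\Delta$ of its restriction $W|_{\Delta_{\leq k}}$, and $W|_{\Delta_{\leq k}}$, having homotopy-finite values, is levelwise equivalent to a finite cell $\Delta_{\leq k}$-diagram built by a finite induction on Reedy degree, whose left Kan extension is then a finite cell simplicial space. \emph{(b)} The CSS model structure is the left Bousfield localization of the (non-localized) model structure on simplicial spaces --- weak equivalences the levelwise equivalences, cofibrations the monomorphisms --- at a set $S$ of maps \emph{between homotopically compact simplicial spaces}. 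The Segal core inclusions $G(n)\to F(n)$ are already of that form, and the completeness generator may be taken to be $\Delta[0]\to Q$ where $Q$ is the finite simplicial space encoding a morphism together with a left inverse, a right inverse, and two $2$-simplices witnessing these; one checks that a Segal space is local with respect to this map exactly when it is complete, so the resulting localization is Rezk's \cite{Rezk01}. Consequently the generating trivial cofibrations of the CSS model structure may be chosen between homotopically compact simplicial spaces, so attaching finitely many of them to a homotopically compact simplicial space again produces a homotopically compact one.

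Granting \emph{(a)}, I would first reduce to the case where $W$ is finite cell: choose a levelwise equivalence $\epsilon\co\tilde W\to W$ with $\tilde W$ finite cell, apply the (already-established) finite-cell case of the lemma to $f\epsilon$ to obtain $\tilde W\hookrightarrow W''\to Y$ with the first map a cofibration, $W''$ homotopically compact and the second map a presentation, and set $W':=W\sqcup_{\tilde W}W''$; then $W\to W'$ is a cofibration, the map $W''\to W'$ --- a pushout of $\epsilon$ along a cofibration --- is a levelwise equivalence (so $W'$ is homotopically compact), and the map $W'\to Y$ induced by $f$ and $W''\to Y$ is a presentation whose composite with $W\to W'$ is $f$. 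So suppose $W$ is finite cell. Fix a finite presentation $X\xrightarrow{p}Y$ and a CSS-fibrant replacement $X\xrightarrow{j}\hat X$ obtained from the small object argument with the generating trivial cofibrations of \emph{(b)}, so that $j$ is a trivial cofibration in the CSS structure and $\hat X$ is built from $X$ by attaching cells with homotopically compact, hence finite, domains. Lifting $p$ across $j$ gives $\hat p\co\hat X\to Y$ with $\hat p j=p$, and since $\hat p$ is a CSS-equivalence between complete Segal spaces it is a levelwise equivalence; hence $\hat p_*\co\RR\map(W,\hat X)\to\RR\map(W,Y)$ is a weak equivalence, and we may pick $\gamma\co W\to\hat X$ together with a homotopy $H\co W\otimes\Delta[1]\to Y$ from $f$ to $\hat p\gamma$. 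As $W$ is finite cell and the cells of $\hat X$ over $X$ have finite domains, $\gamma$ factors through $X$ together with finitely many of those cells; call the resulting finite, homotopically compact sub-cell-complex $X''$. Then $X\hookrightarrow X''$ is a finite relative cell complex of trivial cofibrations, hence a CSS-equivalence, so $q:=\hat p|_{X''}\co X''\to Y$ is a CSS-equivalence, and $\gamma$ factors as $W\xrightarrow{\gamma'}X''\hookrightarrow\hat X$ with $q\gamma'\simeq f$ via $H$.

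I would then take $W'$ to be the mapping cylinder of $\gamma'$, namely $X''\sqcup_{W\otimes\{1\}}(W\otimes\Delta[1])$. The inclusion $f_1\co W=W\otimes\{0\}\hookrightarrow W'$ is levelwise injective, hence a cofibration; the inclusion $X''\hookrightarrow W'$, being a pushout of the trivial cofibration $W\otimes\{1\}\hookrightarrow W\otimes\Delta[1]$, is a levelwise equivalence, so $W'$ is homotopically compact; and the map $f_2\co W'\to Y$ assembled from $q$ and $H$ (which agree on $W\otimes\{1\}$, both equal to $\hat p\gamma$) restricts to $f$ along $f_1$. Finally, $f_2$ composed with the levelwise equivalence $X''\hookrightarrow W'$ is the CSS-equivalence $q$, so $f_2$ is itself a CSS-equivalence, i.e.\ a presentation of $Y$, completing the factorization $W\xrightarrow{f_1}W'\xrightarrow{f_2}Y$. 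The step I expect to be the main obstacle is \emph{(b)}: if completeness could only be forced by localizing at a generator built from the infinite-dimensional nerve of the walking isomorphism, the cells attached in forming $\hat X$ would have non-degenerate simplices in arbitrarily high degree, destroying the bounded-skeleton condition in the definition of homotopical compactness and hence the homotopical compactness of $X''$ and $W'$. Fact \emph{(a)} is routine but is also genuinely used, both to justify ``$\gamma$ factors through finitely many cells'' and to dispose of cofibrancy of $W$.
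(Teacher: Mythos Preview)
Your argument is correct and follows essentially the same route as the paper's sketch: both build a CSS-fibrant replacement of a given finite presentation $X\to Y$ as a filtered colimit of homotopically compact stages (the paper's tower $X(0)\hookrightarrow X(1)\hookrightarrow\cdots$ via Hirschhorn; your $\hat X$ via the small object argument), then use compactness of $W$ to factor through a finite stage and patch with a mapping-cylinder trick. Your write-up is more explicit than the paper's, in particular isolating the key point (b) that the localizing maps for completeness can be taken between homotopically compact simplicial spaces---this is exactly what the paper needs but leaves implicit in order for each $X(j)$ to remain homotopically compact.
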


\proof  (Sketch.) Start with a finite presentation $g\co X\to Y$.
As in \cite[I, Thm 4.3.8]{Hirschhorn}, there is a sequence of cofibrations
\[
X=X(0)\hookrightarrow X(1) \hookrightarrow X(2) \hookrightarrow \cdots
\]
where each $X(j)$ is homotopically compact, and a (degreewise) weak equivalence of simplicial
spaces $\bar g\co \bigcup_n X(n) \to Y$ which extends $g$, in such a way
that $X(j-1)\hookrightarrow X(j)$ is a weak equivalence in the model structure
of complete Segal spaces, for all $j\ge 1$.
Therefore $W'$ can be taken to be $X(r)$ (essentially) for some large $r$.
\qed


\begin{cor} A homotopy pushout of finitely presentable complete Segal spaces (in the model category of such)
is again finitely presentable. \qed
\end{cor}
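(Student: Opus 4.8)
The plan is to build a finite presentation of the homotopy pushout from a single finite presentation of the common source $C$, propagated to the other two corners by the previous lemma so that the resulting comparison of pushout diagrams consists of cofibrations in one direction and weak equivalences in the other. Fix a diagram $A\leftarrow C\to B$ of finitely presentable complete Segal spaces. Applying the lemma to $\emptyset\to C$ produces a finite presentation $g\co X_C\to C$ with $X_C$ cofibrant. Applying it again to the composite $X_C\to C\to A$ --- a map from the homotopically compact $X_C$ into the finitely presentable $A$ --- factors that composite as $X_C\hookrightarrow X_A\to A$ with $X_C\hookrightarrow X_A$ a cofibration, $X_A$ homotopically compact, and $X_A\to A$ a presentation; because this is a factorization of the given composite, the square with vertical maps $g$ and $X_A\to A$ commutes on the nose. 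Doing the same with $B$ in place of $A$ and pasting along the common edge $g$, one obtains a map of spans $(X_A\leftarrow X_C\to X_B)\to(A\leftarrow C\to B)$ whose three vertical components are weak equivalences of complete Segal spaces and whose two horizontal legs $X_C\to X_A$, $X_C\to X_B$ are cofibrations out of a cofibrant object.

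Set $X:=X_A\cup_{X_C}X_B$, the strict pushout. It is then a homotopy pushout of the upper span, both in the Reedy model structure on simplicial spaces and, since the complete Segal space model structure is a left Bousfield localization of that one, in the latter as well. Homotopy colimits send objectwise weak equivalences to weak equivalences, so the induced map $X\to A\cup_C^h B$ onto the homotopy pushout of the lower span is a weak equivalence of complete Segal spaces; thus $X$ presents $A\cup_C^h B$ (replace the target by a fibrant complete Segal space if one insists). It therefore remains only to verify that $X$ is homotopically compact as a simplicial space.

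In simplicial degree $n$, $X_n=(X_A)_n\cup_{(X_C)_n}(X_B)_n$ is the pushout of a cofibration between spaces homotopy equivalent to compact CW-spaces, hence is itself homotopy equivalent to a compact CW-space. For the truncation condition, pick $k$ large enough that $X_A$, $X_B$ and $X_C$ all satisfy the defining condition of homotopical compactness relative to $\Delta_{\le k}$; this is legitimate because that condition, once valid for one bound, is valid for all larger ones (being \emph{$k$-skeletal} amounts to lying in the essential image of derived left Kan extension from $\Delta_{\le k}$, and those essential images grow with $k$). Derived left Kan extension along $\Delta_{\le k}\hookrightarrow\Delta$ followed by restriction to $\Delta$ is a derived left adjoint, so it preserves homotopy pushouts; its canonical transformation to the identity is a levelwise weak equivalence on each of $X_A$, $X_B$, $X_C$, so comparing the two pushout squares corner by corner once more shows it is a levelwise weak equivalence on $X$. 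Hence $X$ is homotopically compact and $X\to A\cup_C^h B$ is a finite presentation. The only steps needing genuine care are these two housekeeping facts --- stability of homotopical compactness under enlarging the truncation bound, and the identification of complete Segal space homotopy pushouts with Reedy homotopy pushouts --- both of which are standard; everything else is formal given the lemma and the homotopy invariance of homotopy colimits.
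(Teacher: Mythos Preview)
Your argument is correct and is precisely the expansion the paper has in mind: it states the corollary with a bare \qed, treating it as immediate from the preceding lemma, and your use of the lemma to thicken a presentation of $C$ to cofibrations $X_C\hookrightarrow X_A$, $X_C\hookrightarrow X_B$ and then take the pushout is the intended (implicit) proof. The housekeeping points you flag --- monotonicity of the truncation bound and agreement of Reedy and complete Segal space homotopy pushouts --- are routine, as you say.
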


\section{Generators and relations for configuration categories}\label{sec-genrel}

Let $\fin$ denote the skeleton of the category of finite sets with objects
$\uli n = \{0,1, \dots, n\}$, for $n \geq 0$. Since most objects of $\fin$ have non-trivial automorphisms, the nerve
$N\fin$ is not a \emph{complete} Segal space. We denote its completion by $\cfin$; the letter $u$ is for \emph{unordered}.
Therefore $\cfin$ in degree $n$ is the classifying space of the groupoid of
functors $[n] \to \fin$ and their isomorphisms. (We will see some alternative models later, for example in
remark~\ref{rem-otherfincp}.)
In particular, the space of $0$-simplices of
$\cfin$ is the disjoint union, over $m$, of $B \Sigma_m$. We let $\FF$ denote the simplicial (discrete)
space obtained by applying $\pi_0$ degreewise to $\cfin$. Given a morphism $f$ in $\fin$, we write $[f]$ for
its image in $\FF$. (Beware: $\FF$ is not the nerve of any category, in other words, it is not a discrete Segal space.
We use it for bookkeeping.)

We are now ready to define $\cconfig(M)$, the unordered configuration category of a topological manifold $M$.
More precisely, this is the particle model of $\cconfig(M)$.

\begin{defn} An object in $\cconfig(M)$ is a finite subset $S \subset M$. A morphism from $S \subset M$ to $T \subset M$
consists of a map $g\co S\to T$ and a (Moore) homotopy $h\co S\times[0,a]\to M$ such that $h_0$ is the inclusion, $h_a$ is
$g$ followed by $T\hookrightarrow M$, and $h$ satisfies the inertia condition: if $h_s(x)=h_s(y)$ for some $x,y\in S$ and
$s\in [0,a]$, then $h_t(x)=h_t(y)$ for all $t\in [s,a]$. \end{defn}

\begin{prop}\label{prop-ucompletion}
The quotient map $\config(M) \to \cconfig(M)$ is a Rezk completion.
\end{prop}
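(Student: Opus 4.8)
The statement amounts to two assertions: that the quotient map is a weak equivalence in the model structure of complete Segal spaces, and that $\cconfig(M)$ is a complete Segal space. Throughout I regard $\config(M)$ and $\cconfig(M)$ as nerves of the topological categories of their particle models, so that the quotient map is $NF$ for the functor $F$ that sends a configuration $c\co \uli n \hookrightarrow M$ to the subset $c(\uli n)\subset M$ and sends a morphism $(g,h)$ to the induced morphism of images.

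\emph{First, $NF$ is a Dwyer--Kan equivalence.} It is surjective on objects already at the level of categories (every finite subset of $M$ is the image of some ordering of itself), hence essentially surjective. For full faithfulness, fix objects $x=(c\co\uli n\hookrightarrow M)$ and $y=(c'\co\uli m\hookrightarrow M)$ of $\config(M)$; a morphism of $\cconfig(M)$ from $Fx$ to $Fy$ --- that is, a map $Fx\to Fy$ together with an inertial Moore homotopy --- has exactly one preimage under $F$, obtained by transporting the map and the homotopy through the bijections $\uli n\cong Fx$, $\uli m\cong Fy$. Thus the square with corners $\mor\config(M)$, $\mor\cconfig(M)$, $\ob\config(M)\times\ob\config(M)$, $\ob\cconfig(M)\times\ob\cconfig(M)$ is cartesian; since the symmetric groups act freely on configuration spaces, $\ob\config(M)\to\ob\cconfig(M)$ is a covering map, so its square is a fibration and the cartesian square above is homotopy cartesian. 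Passing to homotopy fibres of the vertical maps over a point $(Fx,Fy)$ with a chosen lift $(x,y)$ identifies $\map_{\config(M)}(x,y)\simeq\map_{\cconfig(M)}(Fx,Fy)$. (This step uses the point-set fibrancy of the particle model --- the same input that makes $\config(M)$ and $\cconfig(M)$ Segal spaces in the first place and that makes these strict fibres homotopy fibres.) An essentially surjective and fully faithful map of Segal spaces is a weak equivalence in the model structure of complete Segal spaces \cite{Rezk01}.

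\emph{Second, $\cconfig(M)$ is complete}, i.e.\ $s_0\co\cconfig(M)_0\to\cconfig(M)_{\heq}$ is a weak equivalence. I claim a morphism $(g\co S\to T,\,h)$ of $\cconfig(M)$ is an equivalence in $\textup{Ho}(\cconfig(M))$ precisely when $g$ is a bijection. ``Only if'': taking the underlying map of finite sets defines a functor from $\cconfig(M)$ to (the nerve of) the category of finite sets, hence a functor $\textup{Ho}(\cconfig(M))\to\fin$ sending $[(g,h)]$ to $g$; functors preserve isomorphisms, so $g$ is invertible. ``If'': if $g$ is a bijection then the inertia condition forces each $h_s$ to be injective, so $h$ is a Moore path in the space $\cconfig(M)_0$ of finite subsets of $M$, and the reversed path provides a homotopy inverse. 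Since ``$g$ is a bijection'' cuts out a union of path components, $\cconfig(M)_{\heq}$ consists exactly of such $(g,h)$; as $T$ and $g$ are recovered from $h$ via $h_a$, one identifies $\cconfig(M)_{\heq}$ naturally with the Moore path space of $\cconfig(M)_0$, under which $s_0$ is the constant-path inclusion --- a deformation retract, hence a weak equivalence.

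Together these give that $NF\co\config(M)\to\cconfig(M)$ is a weak equivalence in the complete Segal space model structure with complete target, which is exactly what it means to be a Rezk completion of $\config(M)$. The one delicate point is the passage from ``cartesian'' to ``homotopy cartesian'' in the first part: it rests on the point-set fibrancy properties of the particle model (source and target maps are fibrations, degeneracies are cofibrations), which is the reason that model is preferred here; the remainder is bookkeeping with orderings and with the Moore path space of $\cconfig(M)_0$.
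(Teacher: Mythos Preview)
Your proof is correct and takes essentially the same approach as the paper: both verify completeness of $\cconfig(M)$ and establish that the quotient is a Dwyer--Kan equivalence via the cartesian (hence homotopy cartesian) square relating object and morphism spaces. The only difference is that you spell out the completeness argument (identifying $\cconfig(M)_{\heq}$ with the Moore path space of $\cconfig(M)_0$), which the paper dismisses as clear.
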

\begin{proof}
It is clear that $\cconfig(M)$ is complete. To verify that the map is a Rezk completion, we can show that it is a
Dwyer-Kan equivalence. The map is essentially surjective, because it is surjective on objects.

The quotient map on objects $\config(M)_0 \to \cconfig(M)_0$ is a fibration (in fact, a disjoint union of
covering spaces, one for each $n \geq 0$ with fiber $\Sigma_n$). The same is true for the map
$\config(M)_1 \to \cconfig(M)_1$ on morphism spaces. Hence, full faithfulness amounts to the statement that the square
\[
\xymatrix{ \config(M)_1 \ar[d]^-{(d_0,d_1)} \ar[r] & \cconfig(M)_1 \ar[d]^-{(d_0,d_1)} \\
\config(M)_0  \times \config(M)_0 \ar[r] & \cconfig(M)_0 \times \cconfig(M)_0
}
\]

is cartesian. This follows easily by inspection of the horizontal fibers.
\end{proof}

\begin{rem} \label{rem-otherfincp}
The standard map of Segal spaces from $\config(M)$ to $\fin$ induces a map of the completed
Segal spaces $\cconfig(M) \to \cfin$. We like to describe this as follows: it is the map
from $\cconfig(M)$ to $\cconfig(\RR^\infty) $ induced by some embedding $M\to \RR^\infty$.
Here $\cconfig(\RR^\infty)$ is the (sequential) homotopy colimit of the Segal spaces $\cconfig(\RR^s)$ for $s\to \infty$.
\end{rem}

By construction, there is a commutative square of Segal spaces
\[
\xymatrix@R=16pt{
\config(M) \ar[d] \ar[r] & \ar[d] \cconfig(M) \\
	\config(\RR^\infty) \ar[r] & \cconfig(\RR^\infty)
}
\]
where the vertical arrows are induced by $M\hookrightarrow \RR^\infty$.
This square is a degreewise pullback square and also a degreewise homotopy pullback square.
(We may also write $\fin\to \cfin$ for the lower row.)
This implies the following.

\begin{prop}
The map
\[
\RR \map_{\fin}(\config(M), \config(N)) \to \RR \map_{\cfin}(\cconfig(M), \cconfig(N))
\]
determined by completion is a weak equivalence. The derived mapping spaces are taken with respect to degreewise weak equivalences
of simplicial spaces.  \qed
\end{prop}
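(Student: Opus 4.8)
The plan is to factor the completion map, up to homotopy, through two comparison maps
\[
\RR\map_{\fin}(\config(M),\config(N)) \xrightarrow{\beta} \RR\map_{\cfin}(\config(M),\cconfig(N)) \xleftarrow{\gamma} \RR\map_{\cfin}(\cconfig(M),\cconfig(N)),
\]
where $\beta$ is postcomposition with the completion $\config(N)\to\cconfig(N)$ (this lands in the relative mapping space over $\cfin$ because the completion square preceding the statement commutes), and $\gamma$ is precomposition with the completion $\config(M)\to\cconfig(M)$. First I would observe that, by naturality of completion, postcomposing the completion map of the statement with $\gamma$ recovers $\beta$; hence by two-out-of-three it suffices to show that $\beta$ and $\gamma$ are weak equivalences. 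Throughout I would use the homotopy-fibre description of relative derived mapping spaces: for $A$ over a base $B$ one has $\RR\map_{B}(A,C)\simeq\hofiber\bigl(\RR\map(A,C)\to\RR\map(A,B)\bigr)$ over the point named by the structure map of $A$, where $\RR\map$ is the derived mapping space for degreewise weak equivalences (legitimate after replacing $C\to B$ by a fibration, which is harmless because the squares in question are degreewise homotopy pullbacks).

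For $\beta$, I would exploit that $\RR\map(A,-)$ carries homotopy pullbacks to homotopy pullbacks. The degreewise homotopy pullback square before the statement gives $\config(N)\simeq\cconfig(N)\times^{h}_{\cfin}\fin$, hence $\RR\map(A,\config(N))\simeq\RR\map(A,\cconfig(N))\times^{h}_{\RR\map(A,\cfin)}\RR\map(A,\fin)$. Passing to homotopy fibres over the point named by the structure map $A\to\fin$ (and its image $A\to\fin\to\cfin$) identifies $\RR\map_{\fin}(A,\config(N))$ with $\RR\map_{\cfin}(A,\cconfig(N))$ naturally in $A$, and inspection shows this identification is exactly $\beta$ when $A=\config(M)$; in particular $\beta$ is a weak equivalence.

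For $\gamma$, I would appeal to left Bousfield localization. By Proposition~\ref{prop-ucompletion} the completion $\config(M)\to\cconfig(M)$ is a weak equivalence in Rezk's complete Segal space model structure, and $\cconfig(N)$ and $\cfin$ are complete Segal spaces. Since that model structure is a left Bousfield localization of the model structure on simplicial spaces with degreewise weak equivalences, precomposition with $\config(M)\to\cconfig(M)$ induces a weak equivalence $\RR\map(\cconfig(M),Z)\to\RR\map(\config(M),Z)$ for every complete Segal space $Z$ (see~\cite{Rezk01}). Applying this with $Z=\cconfig(N)$ and $Z=\cfin$ and then passing to homotopy fibres, as above, shows that $\gamma$ is a weak equivalence.

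The step I expect to be the main obstacle is not any individual computation but the bookkeeping that assembles these pieces: one must pin down the relative derived mapping spaces and the fibration replacements carefully enough that the homotopy-fibre manipulations are valid, and one must verify that the zig-zag $(\beta,\gamma)$ really does compose to the completion map of the statement --- this is where functoriality of completion and the degreewise (homotopy) pullback property of the completion square are used. Once that is set up, the remaining verifications are formal.
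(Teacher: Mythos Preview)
Your argument is correct and is precisely the kind of unpacking the paper leaves implicit: the paper gives no proof beyond ``This implies the following'' with a \qed, where ``this'' refers to the degreewise homotopy pullback square together with proposition~\ref{prop-ucompletion}. Your two-step factorization through $\RR\map_{\cfin}(\config(M),\cconfig(N))$, handling the target via the pullback square and the source via Bousfield localization against complete Segal spaces, is exactly what that sentence is meant to convey.
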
 We also have the truncated variant $\cconfig(M;\alpha)$ where $\alpha$ is a positive integer. One of our main goals is to
prove the following. (The proof is in section~\ref{sec-def}.)

\begin{thm} \label{thm-finpres} If $M$ is the interior of a compact topological manifold, then the complete
Segal space $\cconfig(M;\alpha)$ is finitely presentable.
\end{thm}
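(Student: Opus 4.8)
The plan is to exhibit $\cconfig(M;\alpha)$ as a finite iterated homotopy pushout, in the model category of complete Segal spaces, of configuration categories of very simple manifolds, and then to invoke the corollary above that homotopy pushouts of finitely presentable complete Segal spaces are again finitely presentable. Since $M=\intr(\bar M)$ for a compact $\bar M$, the manifold $M$ admits a finite exhausting handle decomposition $M_0\subset M_1\subset\cdots\subset M_r=M$ in which $M_0$ is a disjoint union of copies of $\RR^n$ and each $M_{i+1}$ is obtained from $M_i$ by attaching a handle — that is, by gluing on a copy of $\RR^n$ along a codimension-zero open subset. (In the smooth or PL category this is classical; for topological manifolds one reduces to that case via smoothing theory after crossing with $\RR$, using the good behaviour of configuration categories under $(-)\times\RR$, or handles the low-dimensional cases separately.) The induction will be on $r$.

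The content of the proof is a gluing formula for the inductive step. Given $M_{i+1}=M_i\cup H$ with $H\cong\RR^n$ glued along a codimension-zero open subset $U$, one wants to present $\cconfig(M_{i+1};\alpha)$ as a homotopy colimit in complete Segal spaces of a \emph{finite} diagram whose entries are $\cconfig(M_i;\alpha)$, $\cconfig(\RR^n;\alpha)$ and configuration categories of $U$ (and of the multiple overlaps arising in the gluing). The naive Mayer--Vietoris square is useless here: for $S^1=U'\cup V'$ with $U'\cap V'\cong\RR\sqcup\RR$, neither the degreewise homotopy pushout of $\cconfig(U';\alpha)\leftarrow\cconfig(U'\cap V';\alpha)\to\cconfig(V';\alpha)$ nor its completion is $\cconfig(S^1;\alpha)$ — already the space of objects of the pushout loses the configurations of $S^1$ that meet both of the omitted points, and in degree $0$ it returns a wedge of $m$ circles at cardinality $m$ in place of $\mathrm{UConf}_m(S^1)\simeq S^1$. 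What is available is the excision property of the symmetric monoidal assignment $W\mapsto\cconfig(W)$ (which sends $\sqcup$ to $\times$): it presents $\cconfig(M_{i+1};\alpha)$ as a bar construction built from $\cconfig$ of the overlap. A priori this bar construction is an infinite homotopy colimit, and it is here that the hypotheses enter decisively — its simplicial operators split a configuration into disjoint sub-configurations, and a configuration of at most $\alpha$ points can be split only boundedly often, so the bar construction is degreewise weakly equivalent to a finite skeleton. The base of the induction, that $\cconfig(\RR^n;\alpha)$ and $\cconfig$ of finite disjoint unions of copies of $\RR^n$ are finitely presentable, is elementary: all the configuration spaces $\mathrm{UConf}_m(\RR^n)$ and all the morphism spaces involved are homotopy finite, and the homotopy categories are finite, so one simply writes down finite presentations by generators and relations.

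Putting this together, the induction on $r$ — using the gluing formula at each stage — exhibits $\cconfig(M;\alpha)$ as the result of finitely many homotopy pushouts of finitely presentable complete Segal spaces, hence as finitely presentable by the corollary. The step I expect to be the main obstacle is the gluing formula: both establishing the excision property for $\cconfig(-;\alpha)$, and — above all — extracting genuine finiteness from it by controlling the bar construction through the cardinality bound $\alpha$. Everything else is either formal or rests on the homotopy finiteness of configuration spaces of the interior of a compact manifold (equivalently, the compactness of their Fulton--MacPherson / Axelrod--Singer compactifications), which is what makes the elementary pieces finitely presentable and where the hypothesis on $M$ is used.
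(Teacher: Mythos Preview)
Your plan is genuinely different from the paper's, and it has two substantial gaps.

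The paper does not decompose $M$ at all. It works entirely over the bookkeeping simplicial set $\FF=\pi_0(\cfin)$: it defines a \emph{defect} function on simplices of $\FF$, lets $E^\alpha\subset\FF$ be the simplicial subset of defect $\le\alpha$, and shows two things. First, the inclusion $\cconfig(M|E^\alpha)\hookrightarrow\cconfig(M;\alpha)$ is a weak equivalence in the Segal model structure, because every simplex of defect $>\alpha$ (with all sets of size $\le\alpha$) can be attached by an inner horn extension, via a carefully designed matching on the set of such simplices ordered by degree and by the position of the first non-injective/non-surjective map. Second, $E^\alpha$ is finitely generated as a simplicial set (there are only finitely many nondegenerate diagrams of defect $\le\alpha$), so $\cconfig(M|E^\alpha)$ is homotopically compact once the configuration spaces of $M$ are homotopy finite --- which is exactly where compactness of $\bar M$ enters. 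No gluing, no induction on handles.

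Your approach runs into trouble at both ends. The base case is not elementary: $\cconfig(\RR^n;\alpha)$ is \emph{not} homotopically compact as a simplicial space --- it has nondegenerate simplices in every degree, coming from arbitrarily long alternating chains of proper injections and proper surjections among sets of size $\le\alpha$ --- so ``simply writing down generators and relations'' already requires a compression argument of exactly the sort the paper supplies (and which works uniformly in $M$, making the handle induction superfluous). The inductive step is worse: the excision you invoke is a theorem about factorization homology (values in a symmetric monoidal $\infty$-category), not about configuration categories themselves as objects of complete Segal spaces. There is no established bar-type formula expressing $\cconfig(U\cup V;\alpha)$ as a homotopy colimit of $\cconfig(U;\alpha)$, $\cconfig(V;\alpha)$ and $\cconfig(U\cap V;\alpha)$ in this model category; your own $S^1$ example shows the naive square fails, and the replacement you gesture at is not in the literature. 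Even granting such a formula, the claim that the bar construction truncates to a finite diagram ``because a configuration of $\le\alpha$ points can be split only boundedly often'' is exactly the kind of combinatorial control that the defect argument makes precise, and is not obvious on its own.
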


We also aim for an enlightening description of $\cconfig(M;\alpha)$ and the un-truncated $\cconfig(M)$ in terms of (few) generators and
relations. We begin with a general investigation of $\cconfig(M)$ from this point of view.

\smallskip
The generators and relations that we have in mind are indexed by isomorphism classes of commutative diagrams
of finite sets and maps
of the shape
\begin{equation} \label{eqn-filled}
\begin{aligned}
\xymatrix@M=7pt@R=14pt@C=18pt{
S_{0,s} \ar@{<-<}[r] & S_{1,s}  \ar@{<-<}[r]  & \cdots  \ar@{<-<}[r] & S_{r-1,s} \ar@{<-<}[r] & S_{r,s} \\
S_{0,s-1} \ar@{<<-}[u] \ar@{<-<}[r] & S_{1,s-1} \ar@{<<-}[u]  \ar@{<-<}[r] & \cdots  \ar@{<-<}[r] & S_{r-1,s-1} \ar@{<<-}[u] \ar@{<-<}[r] & S_{r,s-1} \ar@{<<-}[u] \\
S_{0,s-2} \ar@{<<-}[u] \ar@{<-<}[r] & S_{1,s-2} \ar@{<<-}[u]  \ar@{<-<}[r] & \cdots  \ar@{<-<}[r] & S_{r-1,s-2} \ar@{<<-}[u] \ar@{<-<}[r] & S_{r,s-2} \ar@{<<-}[u] \\
 \rvdots \ar@{<<-}[u]  & \rvdots  \ar@{<<-}[u]   & \cdots  & \rvdots \ar@{<<-}[u] & \rvdots \ar@{<<-}[u] \\
S_{0,1} \ar@{<<-}[u] \ar@{<-<}[r] & S_{1,1}  \ar@{<<-}[u] \ar@{<-<}[r]  & \cdots  \ar@{<-<}[r] & S_{r-1,1} \ar@{<<-}[u] \ar@{<-<}[r] & S_{r,1} \ar@{<<-}[u] \\
S_{0,0} \ar@{<<-}[u] \ar@{<-<}[r] & S_{1,0} \ar@{<<-}[u] \ar@{<-<}[r] & \cdots  \ar@{<-<}[r] & S_{r-1,0} \ar@{<<-}[u] \ar@{<-<}[r] & S_{r,0} \ar@{<<-}[u]
}
\end{aligned}
\end{equation}
in which all horizontal arrows are injective and all vertical arrows are surjective. Such a diagram,
call it $\sD$, is determined (up to unique isomorphism) by the sub-diagram
\begin{equation} \label{eqn-edges}
\begin{aligned}
\xymatrix@M=7pt@R=14pt@C=18pt{
S_{0,s} \ar@{<-<}[r] & S_{1,s} \ar@{<-<}[r]  & \cdots  \ar@{<-<}[r] & S_{r-1,s} \ar@{<-<}[r] & S_{r,s} \\
 S_{0,s-1} \ar@{<<-}[u] & & & & \\
 \rvdots  \ar@{<<-}[u]  & & & & \\
S_{0,1}  \ar@{<<-}[u] & & & & \\
S_{0,0}  \ar@{<<-}[u] &  & & &
}
\end{aligned}
\end{equation}
which we denote $\sD^\natural$. Often  we will assume that none of the arrows in $\sD^\natural$ are bijective; this amounts
to a nondegeneracy assumption. If we suppose, in addition, that all the finite sets which appear in diagram $\sD^\natural$
have cardinality $\le \alpha$, where $\alpha$ is a fixed positive integer, then the number of isomorphism classes of such diagrams
is clearly finite.

\begin{notn} \label{not:diagrams} $\rule{0mm}{3mm}$  
\begin{itemize}
\item[(i)] Write $\Delta[r]$ for the representable simplicial set represented by the object $[r]$ of $\Delta$.
It is permitted to regard this as the nerve of $[r]^\op$. We tend to view $\Delta[r]$ as a simplicial (discrete)
space. Write $\partial(\Delta[r])$ for the boundary, the simplicial subset generated by the proper faces
of the unique nondegenerate simplex in degree $r$. We may also write
$\partial(\Delta[r]\times\Delta[s])$ for the boundary of $\Delta[r]\times\Delta[s]$, i.e.,
the union of $\partial(\Delta[r])\times\Delta[s]$ and $\Delta[r]\times\partial(\Delta[s])$.
\item[(ii)] For a simplicial space $X$ and a space $K$, let $X\otimes K$ be the simplicial space obtained from $X$ by
taking product with $K$ in every degree.
\item[(iii)] Let $R(\sD)$ be the space of derived maps from $\Delta[r]\times\Delta[s]$ to $\cconfig(M)$
which realize diagram $\sD$, regarded as a map of simplicial sets from $\Delta[r] \times \Delta[s]$ to $\FF$. (By the remark about
$\sD^\natural$, the space $R(\sD)$ is weakly equivalent to a union of connected
components of $\cconfig(M)_{r+s}$; see also
proposition \ref{prop:epimono}.) Let $R(r,s)$ be the disjoint
union of $R(\sD)$ for such $\sD$.
\item[(iv)] For a diagram $\sD$ as above, we may
use the (non-calligraphic) letter $D$ to denote the image of the associated
map $\Delta[r]\times\Delta[s]\to \FF$, so that $D$ is a simplicial subset of $\FF$.
\item[(v)] Let $C\subset \FF$ be a simplicial subset. We write $\cconfig(M|C)$ for the preimage of $C$
under the reference map $\cconfig(M) \to \FF$.
\end{itemize}
\end{notn}

\begin{defn} A simplicial subset $C$ of $\FF$ is \emph{accessible} if it is a union of simplicial
subsets of the form $D=\im(\sD)$, notation~\ref{not:diagrams} (iv); $\sD$ is a diagram as in~\eqref{eqn-filled}.
In such a case, let $R^C(r,s)$ be the disjoint union of the $R(\sD)$, notation~\ref{not:diagrams} (iii), such that $D\subset C$.
\end{defn}
It turns out that $\FF$ itself is accessible (corollary~\ref{cor-defdiag}).

\begin{thm} \label{thm-hocoend}  For an accessible simplicial subset $C$ of $\FF$, the
tautological map
\[  R^C(r,s) \otimes^{\LL}_{([r],[s]) \in \Delta \times \Delta} (\Delta[r]\times\Delta[s]) \lra \cconfig(M|C)  \]
is a weak equivalence in the model structure of Segal spaces.
\end{thm}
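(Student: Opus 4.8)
The statement is a "co-Yoneda" or "density" type result: it asserts that $\cconfig(M|C)$ is recovered as a homotopy coend of representables $\Delta[r]\times\Delta[s]$ weighted by the mapping spaces $R^C(r,s)$. The natural strategy is to first reduce to the case $C = D = \im(\sD)$ for a single diagram $\sD$, then to the absolute case $C = \FF$, and in each reduction use that both sides commute with the relevant homotopy colimits. Concretely, I would argue as follows.

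First I would set up the tautological map carefully and observe that it is by construction a map of simplicial spaces over $\FF$, refining to a map over $C$; evaluating at a simplex $\sigma\co \Delta[r']\times\Delta[s']\to \FF$ of $C$, the fiber of the homotopy coend over $\sigma$ should be identified, using the (derived) co-Yoneda lemma for bisimplicial spaces, with the space of pairs $(\sD,\phi)$ where $\phi$ realizes $\sigma$ inside $R(\sD)$. The key combinatorial input — and I expect this to be the main obstacle — is the statement referenced as proposition~\ref{prop:epimono}: that any simplex of $\FF$ factors, essentially uniquely, through an epi-mono diagram of the shape \eqref{eqn-filled}, i.e. the category of such factorizations is homotopically discrete (contractible components, one per factorization). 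Granting this, the fiber of the homotopy coend over $\sigma$ is weakly equivalent to the fiber of $\cconfig(M|C)$ over $\sigma$, namely a union of components of $\cconfig(M)_{r'+s'}$; this is exactly the content needed, and it is where the nondegeneracy remark about $\sD^\natural$ and the identification $R(\sD)\simeq$ union of components of $\cconfig(M)_{r+s}$ gets used.

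The next step is the reduction across $C$. Since $C$ is a union of the $D=\im(\sD)$, and a filtered union at that (one can arrange the $D$'s to form a directed system by taking finite unions, each still accessible), both $\cconfig(M|{-})$ and $R^{(-)}(r,s)\otimes^{\LL}(\Delta[r]\times\Delta[s])$ send such filtered unions to homotopy colimits — the left side because preimages commute with filtered colimits and the model structure of Segal spaces is compatible with these, the right side because the homotopy coend commutes with homotopy colimits in the weight. Moreover a filtered colimit of weak equivalences of Segal spaces is a weak equivalence. So it suffices to treat $C = D$ for a single $\sD$, and then, via a Mayer--Vietoris / homotopy pushout argument using the Corollary to Lemma~1.5 (homotopy pushouts of finitely presentable complete Segal spaces), to treat the "cells" $D$ built from $\Delta[r]\times\Delta[s]$ and its boundary one at a time. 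For $C = D$ the left side is (a union of components of) $\cconfig(M)$ pulled back along a single representable-ish simplicial set, and the claim becomes the assertion that the relevant slice of the homotopy coend collapses to that single term plus lower cells — here the fact that $\sD$ is determined by $\sD^\natural$ and the "no bijective arrows" nondegeneracy condition guarantee that the indexing category of $\sD'$ with $D'\subset D$ has a terminal-up-to-homotopy object.

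Finally I would assemble: having the fiber identification over every simplex of $C$, and knowing that both sides are Segal spaces (the right side is a homotopy colimit of representables, hence a Segal space after localization, and the tautological map is compatible with the Segal maps), the map is a degreewise weak equivalence over $C$ and therefore a weak equivalence in the Segal model structure. The one genuine subtlety I would flag is keeping the bookkeeping honest: because $\FF$ is \emph{not} a Segal space, the homotopy coend on the left is a priori only a simplicial space, and one must check that the localization to Segal spaces does not destroy the fiberwise computation — this is handled by noting that $\cconfig(M|C)\to C$ is already a "left fibration-like" map (its fibers are the honest configuration-category fibers), so the Segal condition on the total space is inherited, and the comparison map is a weak equivalence before localization, hence after.
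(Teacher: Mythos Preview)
Your proposal has a genuine gap at the decisive point. You assert in the final paragraph that ``the comparison map is a weak equivalence before localization, hence after.'' This is false in general: the tautological map from the homotopy coend to $\cconfig(M|C)$ is \emph{not} a degreewise weak equivalence of simplicial spaces. To see why, consider attaching a single cell $D=\im(\sD)$ (with $r,s>0$) to a saturated $C$ containing its boundary. On the coend side this attaches exactly $(\Delta[r]\times\Delta[s])\otimes^{\LL} R(\sD)$ along $\partial(\Delta[r]\times\Delta[s])\otimes^{\LL} R(\sD)$. On the $\cconfig(M|{-})$ side, however, one adds not only the image of the maximal $(r,s)$-shuffle but also the images $\bar\sigma$ of all the non-maximal shuffles, and these are genuinely new simplices of $\FF$ (and hence new components in positive simplicial degrees). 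The paper's argument is precisely that each of those non-maximal additions is a (parametrized) \emph{inner generalized horn extension}, hence a weak equivalence in the Segal model structure though not degreewise; only the last step, for the maximal shuffle, is the honest pushout matching the coend side. This is the content of proposition~\ref{prop-addrel}, which rests on the shuffle combinatorics of lemmas~\ref{lem-Ainneranodyne}, \ref{lem-conAinneranodynereal}, \ref{lem-addspherereal}. Your fiberwise/co-Yoneda plan never produces this identification and cannot, because the discrepancy is invisible at the level of fibers over individual simplices of $\FF$ --- it lives in how the shuffle simplices glue.

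A secondary issue: you invoke proposition~\ref{prop:epimono} as saying that ``the category of epi-mono factorizations of a simplex of $\FF$ is homotopically discrete.'' That is not what the proposition says; it is a statement about a specific face operator $d_1$ in $\cconfig(M)$ being a weak equivalence over a fixed epi-mono $2$-simplex, and it feeds into the paper's argument only indirectly (through the identification of $R(\sD)$ with a union of components of $\cconfig(M)_{r+s}$). Your proposed ``reduction to $C=D$ via filtered colimits, then to a terminal-up-to-homotopy $\sD'$'' does not survive contact with the actual indexing: the $\sD'$ with $D'\subset D$ do not form a category with any useful terminal object, and the paper instead proceeds by an enumeration of the $\sD_i$ with cellular boundary conditions and an induction comparing the two pushout squares (coend side and $\cconfig$ side) at each stage. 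That inductive comparison, with proposition~\ref{prop-addrel} supplying the $\cconfig$-side pushout, is the missing idea in your outline.
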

This will be proved in section~\ref{sec-epimono}. The $\otimes^{\LL}$ notation
is for a homotopy coend, a.k.a.~two-sided bar construction,
which we make from the covariant functor $([r],[s])\mapsto \Delta[r]\times\Delta[s]$
from $\Delta\times\Delta$ to the category of simplicial spaces and the contravariant functor
$([r],[s])\mapsto R^C(r,s)$.

\begin{cor} \label{cor-hocoend} For an accessible simplicial subset $C$ of $\FF$, the
tautological map
\[  R^C(r,r) \otimes^{\LL}_{[r] \in \Delta} (\Delta[r]\times\Delta[r]) \lra \cconfig(M|C)  \]
is a weak equivalence in the model structure of Segal spaces.
\end{cor}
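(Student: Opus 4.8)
The plan is to deduce the corollary from Theorem~\ref{thm-hocoend} by a ``diagonal'' (Fubini) argument. Write $\delta\co\Delta\to\Delta\times\Delta$ for the diagonal functor $[k]\mapsto([k],[k])$. By the co-Yoneda description of the tautological maps, the map in the corollary factors as
\[
R^C(r,r)\otimes^{\LL}_{[r]\in\Delta}(\Delta[r]\times\Delta[r])
\;\xrightarrow{\ c\ }\;
R^C(r,s)\otimes^{\LL}_{([r],[s])\in\Delta\times\Delta}(\Delta[r]\times\Delta[s])
\;\lra\;\cconfig(M|C),
\]
where $c$ is the map induced by restriction along $\delta$ and the second arrow is the tautological map of Theorem~\ref{thm-hocoend}, hence a weak equivalence in the model structure of Segal spaces. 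So, granting the theorem, it suffices to prove that $c$ is a degreewise weak equivalence of simplicial spaces (which is in particular a weak equivalence in the Segal model structure).

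The point is that the covariant functor $([a],[b])\mapsto\Delta[a]\times\Delta[b]$ on $\Delta\times\Delta$ is the left Kan extension along $\delta$ of the functor $[k]\mapsto\Delta[k]$ on $\Delta$. Indeed, in simplicial degree $n$ one has $\bigl(\delta_!(\Delta[\bullet])\bigr)([a],[b])_n=\int^{[k]}\Delta([k],[a])\times\Delta([k],[b])\times\Delta([n],[k])$, and co-Yoneda --- applied in the covariant variable $\Delta([n],[k])$ against the contravariant functor $[k]\mapsto\Delta([k],[a])\times\Delta([k],[b])$ --- evaluates this to $\Delta([n],[a])\times\Delta([n],[b])=(\Delta[a]\times\Delta[b])_n$; moreover this point-set Kan extension already computes the derived one since the coefficient $\Delta[\bullet]$ is flat. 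Feeding this into the Fubini identity $F\otimes^{\LL}_{\sC}\delta_!(H)\simeq\delta^*F\otimes^{\LL}_{\sD}H$ for two-sided bar constructions shows that the target of $c$ is naturally weakly equivalent to $R^C(r,r)\otimes^{\LL}_{[r]\in\Delta}\Delta[r]$, which by co-Yoneda is in turn weakly equivalent to the simplicial space $[r]\mapsto R^C(r,r)$.

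It then remains to match $c$, under these identifications, with the map $R^C(r,r)\otimes^{\LL}_{\Delta}\Delta[r]\to R^C(r,r)\otimes^{\LL}_{\Delta}(\Delta[r]\times\Delta[r])$ induced by the diagonal inclusion $\Delta[r]\hookrightarrow\Delta[r]\times\Delta[r]$. That diagonal inclusion is exactly the unit of the Kan extension adjunction, and the required compatibility is the triangle-identity bookkeeping for the Fubini equivalence; I expect this to be the fiddliest step, though it is routine in principle. Once it is in place one is done: the diagonal $\Delta[r]\hookrightarrow\Delta[r]\times\Delta[r]$ is a levelwise weak equivalence of simplicial sets (both are contractible), and a two-sided bar construction carries a levelwise weak equivalence in its covariant coefficient variable to a weak equivalence, so $c$ is a weak equivalence and the corollary follows. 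Note that this argument works for the coefficient system $R^C$ with no special property of $R^C$ used, and the only genuinely topological input --- contractibility of products of simplices --- is trivial; this is why the statement deserves to be called a corollary.
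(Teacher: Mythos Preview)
Your argument is correct in substance, but it takes a much longer road than the paper does. The paper's proof is one sentence: the diagonal functor $\delta^{\op}\co\Delta^{\op}\to\Delta^{\op}\times\Delta^{\op}$ is homotopy terminal (cofinal), a standard fact equivalent to the statement that the diagonal of a bisimplicial space computes its iterated realization. Granting this, the comparison map
\[
\delta^*F\otimes^{\LL}_{\Delta}\delta^*G\;\lra\;F\otimes^{\LL}_{\Delta\times\Delta}G
\]
is a weak equivalence for any $F,G$, and the corollary follows immediately from Theorem~\ref{thm-hocoend}.

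What you do instead is essentially \emph{re-prove} that cofinality in this instance, via the identification $\delta_!\Delta[\bullet]\cong\Delta[-]\times\Delta[-]$ and the Fubini formula $F\otimes^{\LL}_{\Delta\times\Delta}\delta_!H\simeq\delta^*F\otimes^{\LL}_{\Delta}H$. That is a legitimate and self-contained route; it has the virtue of making visible exactly which ingredient does the work (contractibility of $\Delta[r]\times\Delta[r]$, i.e.\ the unit $\Delta[r]\hookrightarrow\Delta[r]\times\Delta[r]$ being an equivalence). One small wrinkle: your phrasing ``match $c$ with the map induced by the diagonal inclusion'' is directionally off, since $c$ and the unit-induced map point in opposite directions. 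What you actually need is the triangle identity $\textup{Fubini}=c\circ(\textup{unit-induced map})$; then since Fubini and the unit map are equivalences, so is $c$. That is indeed routine, as you suspected, and once stated this way the ``fiddliest step'' disappears.

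In short: the paper black-boxes cofinality of $\delta^{\op}$; you unpack it. Both are fine, and yours is more informative, but for a corollary the one-line citation is the expected move.
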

\begin{proof}
It is well known that the diagonal functor $\Delta^{\op} \to \Delta^{\op}  \times \Delta^{\op}$ is (homotopy) terminal.
\end{proof}

Theorem~\ref{thm-hocoend} can be applied with $C$ equal to $\FF$.
In that case it sheds some light on the relationship between
$\cconfig(M)$ and the Segal subspaces of $\cconfig(M;\alpha)$ which we called $\cconfig^{-}\!(M;\alpha)$ and
$\cconfig^{+}\!(M;\alpha)$ in example~\ref{expl-insur}. Write $\cconfig^{-}\!(M)$ for the union of the
$\cconfig^{-}\!(M;\alpha)$ where $\alpha\ge 1$, and $\cconfig^{+}\!(M)$ for the union of the $\cconfig^{+}\!(M;\alpha)$.
Then $\cconfig^{-}\!(M)$
corresponds, in the expression which the theorem has for $\cconfig(M)$, to the simplicial subspace
\[  R(0,s)   \otimes^{\LL}_{[s] \in \Delta} (\Delta[0]\times\Delta[s]) \]
and $\cconfig^{+}\!(M)$ corresponds to the simplicial subspace
\[ R(r,0) \otimes^{\LL}_{[r] \in \Delta} (\Delta[r]\times\Delta[0]). \]
In this example we have no finite generation statement and it would not make sense.
Nevertheless a connection can be made between theorem~\ref{thm-finpres} and
theorem~\ref{thm-hocoend} or corollary~\ref{cor-hocoend}. For that, fix $\alpha\ge 0$ and let $E^\alpha\subset \FF$ be the simplicial
subset which is the union of all $D$ for diagrams $\sD$ as in~\eqref{eqn-filled} where the cardinality
of all sets involved is $\le\alpha$. By construction, $E^\alpha$ is accessible. In section~\ref{sec-def} we show that the inclusion
$\cconfig(M|E^\alpha) \to \cconfig(M;\alpha)$ is a weak equivalence in the
model category structure of Segal spaces. Moreover
it is clear that
\[  R^{E^\alpha}(r,r) \otimes^{\LL}_{[r] \in \Delta} (\Delta[r]\times\Delta[r]) \]
is homotopically compact as a simplicial space if $M$ is the interior of a compact mani\-fold, essentially
because $E^\alpha$ is finitely generated as a simplicial set. In this way
corollary~\ref{cor-hocoend} specializes to a ``preferred'' finite presentation of $\cconfig(M;\alpha)$.


\section{Epi-mono factorizations} \label{sec-epimono}

In this section we prove theorem \ref{thm-hocoend}. The main step is proposition \ref{prop-addrel}, below. We make a
start by isolating a simple but important observation.

\begin{prop}\label{prop:epimono}
Let $S_0 \stackrel{f_1}{\leftarrowtail} S_1 \stackrel{f_2}{\twoheadleftarrow} S_2$ be a diagram of finite sets,
$f_2$ surjective and $f_1$ injective. Write $f=f_1f_2$. Let $\cconfig(M)_{(f_1,f_2)}$ denote the part of $\cconfig(M)_2$ over the
$2$-simplex of $\FF$ defined by this diagram and let
$\cconfig(M)_f$ denote the part of $\cconfig(M)_1$ over the 1-simplex determined by $f$. Then the composition operator
\[
d_1 : \cconfig(M)_{(f_1,f_2)} \to \cconfig(M)_f
\]
is a weak equivalence.
\end{prop}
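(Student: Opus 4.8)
The plan is to work throughout in the particle model and to show that $d_1$ is a (split) homotopy equivalence by writing down an explicit section and a deformation retraction. First I would unwind the two spaces concretely: a point of $\cconfig(M)_{(f_1,f_2)}$ is a composable pair $(\beta\colon T_2\to T_1,\ \alpha\colon T_1\to T_0)$ of morphisms of $\cconfig(M)$, where $T_i\subseteq M$ is a configuration and the underlying map of finite sets of $\beta$ (resp.\ $\alpha$) is isomorphic to $f_2$ (resp.\ $f_1$); a point of $\cconfig(M)_f$ is a single morphism $\gamma=(g,h\colon T_2\times[0,c]\to M)$ with underlying map $f$; and $d_1(\beta,\alpha)=\alpha\beta$, which in this model is the concatenation of Moore homotopies, so that the length of $\alpha\beta$ is the sum of the lengths of $\alpha$ and $\beta$. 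The one structural input I need is the following consequence of the inertia condition for a $\gamma$ over $f$: the partition of $T_2$ cut out by the fibres of $h_t$ is coarser than the discrete partition and finer than the partition of $f$ for every $t$, it is non-decreasing in $t$ by inertia, it is the discrete partition at $t=0$, and — since $h_c=\iota_{T_0}\circ f$ and $f_1$ is injective — it is exactly the partition of $f_2$ at $t=c$. In particular $h_t$ never produces a collision not already forced by $f$.

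Next I would define the section $s\colon\cconfig(M)_f\to\cconfig(M)_{(f_1,f_2)}$ that performs all of $\gamma$ as the first, surjective morphism and nothing as the second, injective one: put $T_1:=h_c(T_2)$, let $\beta$ be the morphism $T_2\to T_1$ with Moore homotopy $h$ and underlying map $x\mapsto h_c(x)$, and let $\alpha$ be the length-$0$ morphism given by the inclusion $T_1\hookrightarrow T_0$. That $\beta$ is over $f_2$ and $\alpha$ over $f_1$ is exactly the essential uniqueness of the epi-mono factorisation of $f$, which is also the combinatorial shape of the $2$-simplex of $\FF$ beneath $\cconfig(M)_{(f_1,f_2)}$. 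The construction is manifestly continuous in $\gamma$, and $d_1 s=\id$ on the nose because concatenating with a length-$0$ homotopy does nothing.

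It remains to homotope $sd_1$ to the identity by sliding the cut point of a composable pair to the very end. Let $Z\subseteq\cconfig(M)_{(f_1,f_2)}$ be the locus of pairs whose second morphism has length $0$; then $Z$ is precisely the image of $s$, and $d_1$ restricts to a homeomorphism $Z\xrightarrow{\ \cong\ }\cconfig(M)_f$ with inverse $s$. Given $(\beta,\alpha)$ with $\gamma:=\alpha\beta$ of length $c$ and $\mathrm{len}(\beta)=b\le c$, set $b_u:=(1-u)b+uc$ for $u\in[0,1]$ and let $\phi_u(\beta,\alpha)$ be the factorisation of $\gamma$ at cut time $b_u$: first morphism $=h$ restricted to $T_2\times[0,b_u]$, onto the configuration $h_{b_u}(T_2)$; second morphism $=$ the portion $s\mapsto h_{b_u+s}$ (well-defined by inertia), of length $c-b_u$. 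This is continuous in $(\beta,\alpha,u)$, it is $(\beta,\alpha)$ at $u=0$ and $sd_1(\beta,\alpha)$ at $u=1$, and it fixes $Z$ pointwise. Crucially it never leaves $\cconfig(M)_{(f_1,f_2)}$: since $b_u\ge b$ and $\beta$ is over $f_2$, the structural fact shows the fibre partition of $h_{b_u}$ is exactly that of $f_2$, so the first morphism stays over $f_2$, and — because $f$ and $f_2$ induce the same partition and $h$ introduces no extra collisions — the second stays over $f_1$, with source and target still $T_2$ and $T_0$. Thus $\phi$ is a strong deformation retraction of $\cconfig(M)_{(f_1,f_2)}$ onto $Z$, and composing with $d_1|_Z$ exhibits $d_1$ as a homotopy equivalence, in particular a weak equivalence.

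The delicate point, and the place where some care is needed, is the interplay between the inertia condition and the point-set topology on spaces of Moore homotopies: the obvious candidate section — cut $\gamma$ at the earliest time at which the collision pattern of $f_2$ has been realised — fails, because that time is only lower semicontinuous in $\gamma$. The argument above avoids this by cutting at the manifestly continuous time $\mathrm{len}(\gamma)$ and using the monotonicity of the collision pattern to ensure that the interpolating cut points cannot push us out of the subspace.
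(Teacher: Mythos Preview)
Your argument is correct and self-contained: the section $s$ cutting at full length and the sliding-cut deformation retraction $\phi_u$ do exactly what you claim, and your care in using inertia plus the injectivity of $f_1$ to pin the collision partition at the $f_2$-partition for all $b_u\ge b$ is the crux.

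The paper takes a genuinely different and much less explicit route. It first passes to the ordered configuration category $\config(M)$ (using that $\config(M)\to\cconfig(M)$ is a degreewise covering and a Rezk completion, so the two statements are equivalent), and then observes that $d_1\colon\config(M)_{(f_1,f_2)}\to\config(M)_f$ is a map over $\config(M)_0$ via the ultimate target operator; it concludes by comparing homotopy fibers of the ultimate target map, citing the description of those fibers from \cite{BoavidaWeissLong}. So the paper's proof is a two-line reduction to machinery developed elsewhere, whereas yours is an elementary, model-specific construction that actually produces a strong deformation retraction. Your approach buys a self-contained and slightly stronger statement, and it makes completely transparent where the hypotheses ``$f_1$ injective, $f_2$ surjective'' are used (monotonicity and eventual saturation of the collision pattern under inertia). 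The paper's approach buys brevity and ties the result into the general fibrewise picture of $\config(M)_n\to\config(M)_0$ that recurs throughout the subject.
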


\begin{proof} There is an analogous statement for $\config(M)$, and the two are easily shown to be equivalent.
In the $\config(M)$ statement, the diagram has the form
\[  \uli m \stackrel{f_1}{\leftarrowtail} \uli\ell \stackrel{f_2}{\twoheadleftarrow} \uli k \]
The map $d_1\co\config(M)_{(f_1,f_2)} \to \config(M)_f$
is a map over $\config(M)_0$ by dint of the ultimate target map. It follows from the
description of the homotopy fibers of the ultimate target map given in \cite{BoavidaWeissLong} that the induced map on homotopy
fibers is a weak equivalence.
\end{proof}

\subsection{Attaching generators and relations}
For $t>0$ and a $t$-simplex $z$ of $\FF$ represented by a diagram of finite sets and maps
\[
\xymatrix@C=20pt@M=5pt{
S_0 & \ar[l]_-{f_1}  S_1 & \ar[l]_-{f_2}  S_2 & \ar[l]  \cdots & \ar[l]_-{f_{t-1}}  S_{t-1} & \ar[l]_-{f_t}  S_t
}
\]
let $e(z)$ be the $(2t)$-simplex in $\FF$ obtained from that same diagram by decomposing each map $f_i$ into two maps, surjection
followed by injection.
\begin{defn} A simplicial subset $C$ of $\FF$ is \emph{saturated} if for every $t>0$ and every
$t$-simplex $z$ of $C$, the $(2t)$-simplex $e(z)$ of $\FF$ also belongs to $C$.
\end{defn}
\begin{lem} For simplicial subsets of $\FF$, accessible implies saturated. \qed
\end{lem}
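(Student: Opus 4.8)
The plan is to reduce the statement to a single diagram and then produce the required $(2t)$-simplex by refining a lattice path in that diagram. First I would record two easy points. One: $e(z)$ depends only on $z$ and not on a chosen representing chain of finite sets, since the epi--mono (image) factorization of a map of finite sets is canonical and hence compatible with isomorphisms of chains; so $e(-)$ is well defined on simplices of $\FF=\pi_0(\cfin)$. Two: a union of saturated simplicial subsets of $\FF$ is again saturated (if $z$ lies in one of them, then so does the $(2t)$-simplex $e(z)$). Since accessibility of $C$ means precisely that $C$ is a union of subsets $D=\im(\sD)$ for filled diagrams $\sD$ as in~\eqref{eqn-filled}, these two points reduce the lemma to showing that each such $D$ is saturated.

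So fix a filled diagram $\sD$, with object $S_{i,j}$ at the node $(i,j)$ for $0\le i\le r$, $0\le j\le s$, horizontal maps (changing $i$) injective and vertical maps (changing $j$) surjective, and let $D\subset\FF$ be its image. A $t$-simplex $z$ of $D$ is the image, under the associated map $\Delta[r]\times\Delta[s]\to\FF$, of a $t$-simplex of $\Delta[r]\times\Delta[s]$, that is, of a monotone lattice path $(i_0,j_0),\dots,(i_t,j_t)$ in the grid; and $z$ is then the chain $S_{i_0,j_0}\leftarrow\cdots\leftarrow S_{i_t,j_t}$ whose maps are the corresponding composite maps of $\sD$. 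The crucial observation is the epi--mono factorization of the $k$-th of these maps, $g_k\co S_{i_k,j_k}\to S_{i_{k-1},j_{k-1}}$: routing through the node $(i_k,j_{k-1})$ and using commutativity of $\sD$ writes $g_k$ as the vertical composite $S_{i_k,j_k}\twoheadrightarrow S_{i_k,j_{k-1}}$, which is surjective, followed by the horizontal composite $S_{i_k,j_{k-1}}\hookrightarrow S_{i_{k-1},j_{k-1}}$, which is injective; being an epimorphism followed by a monomorphism, this is the image factorization, with $S_{i_k,j_{k-1}}$ the image of $g_k$.

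It remains to assemble $e(z)$. Insert the node $(i_k,j_{k-1})$ between $(i_{k-1},j_{k-1})$ and $(i_k,j_k)$ for each $k=1,\dots,t$; checking coordinates shows the result is still a monotone lattice path, now of length $2t$, hence a $(2t)$-simplex of $\Delta[r]\times\Delta[s]$. By the previous paragraph its image in $\FF$ is the chain obtained from $z$ by replacing each $g_k$ with its image factorization, i.e.\ exactly $e(z)$; so $e(z)\in D$ and $D$ is saturated. The only step carrying real content is the epi--mono identification, and the single thing to watch there is the choice of routing node: going through $(i_{k-1},j_k)$ instead would exhibit $g_k$ as a monomorphism followed by an epimorphism, which is not the image factorization. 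Everything else is bookkeeping with lattice paths, so I do not anticipate a genuine obstacle.
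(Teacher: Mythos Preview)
Your argument is correct and is precisely the reasoning the paper has in mind; the paper marks this lemma with a bare \qed and offers no proof. The reduction to a single $D=\im(\sD)$ via closure under unions, followed by realizing $e(z)$ as the image of the refined lattice path obtained by inserting the corner $(i_k,j_{k-1})$ between consecutive nodes (so that the vertical-then-horizontal route exhibits the epi--mono factorization inside $\sD$), is exactly the intended one-line justification spelled out in full.
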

A union of saturated simplicial subsets of $\FF$ is always saturated, and a union of accessible
simplicial subsets of $\FF$ is always accessible.

\smallskip
In the next proposition we use notation~\ref{not:diagrams} (ii), but in fact we extend or abuse it slightly and
write $\otimes^\LL$ instead of $\otimes$ to indicate that cofibrant replacement must be inflicted on the
simplicial space factor first. (This refers to the ``projective'' model category structure on the category of simplicial
spaces where the weak
equivalences and fibrations are defined levelwise.)

\begin{prop} \label{prop-addrel} Let $C$ be a saturated simplicial subset of $\FF$.
Suppose that $y$ is an $(r+s)$-simplex of $\FF$ represented by a diagram
$\sD^\natural$ as in \emph{(\ref{eqn-edges})}. Suppose that all proper faces of $y$ belong to
$C$, but $y$ itself does not belong to $C$.
Then there is a commutative square
\[
\xymatrix{
\partial(\Delta[r] \times \Delta[s]) \otimes^\LL R(\mathcal{D}) \ar[r] \ar[d] & (\Delta[r] \times \Delta[s]) \otimes^\LL R(\mathcal{D}) \ar[d] \\
	\cconfig(M|C) \ar[r] & \cconfig(M|C\cup D)
}
\]
with tautological vertical maps, which is a homotopy pushout square of simplicial spaces in the Segal space
model structure.
\end{prop}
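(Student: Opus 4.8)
The plan is to prove the homotopy-cocartesianness directly, by identifying the ordinary pushout of the top and left maps with $\cconfig(M|C\cup D)$ up to a Segal equivalence. Since $\partial(\Delta[r]\times\Delta[s])\hookrightarrow\Delta[r]\times\Delta[s]$ is a monomorphism of simplicial sets and $R(\sD)$ may be taken cofibrant, the top horizontal map is a projective cofibration, so the honest pushout $P$ already computes the homotopy pushout (homotopy pushouts agree in the Segal localization and in the projective structure on simplicial spaces). Hence it suffices to show the canonical map $P\to\cconfig(M|C\cup D)$ is a weak equivalence in the Segal model structure. As a preliminary one must check that the left vertical map is even defined, i.e. that the map of simplicial sets $\phi\colon\Delta[r]\times\Delta[s]\to\FF$ given by $\sD$ carries $\partial(\Delta[r]\times\Delta[s])$ into $C$.

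The key combinatorial input — which also settles this preliminary — is that $C\cap D$ is precisely $D$ with the open cell carried by $y$ deleted; equivalently, the only nondegenerate simplex of $D=\im(\phi)$ lying outside $C$ is $y$ itself. This holds because $\sD$ is reconstructed from $\sD^\natural$ by nothing more than forming composites and epi–mono factorizations, so that every simplex of $\im(\phi)$ which is not a degeneracy of $y$ is, after discarding degeneracies and passing to faces, of the form $e(z)$ for $z$ a proper face of $y$; since $C$ contains all proper faces of $y$ and is saturated and closed under faces and degeneracies, all such simplices lie in $C$. I expect this to be the main obstacle: one must formulate and prove the precise statement that $\sD$ is generated by $\sD^\natural$ under composition and epi–mono factorization, and keep careful track of how it pins down $\im(\phi)\smallsetminus C$ (here the nondegeneracy hypothesis on $\sD^\natural$ enters, guaranteeing in particular that $y$ has pairwise distinct faces).

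Granting this, I would localize the problem. Taking preimages under the levelwise fibration $\cconfig(M)\to\FF$ preserves the colimit $C\cup D=C\cup_{C\cap D}D$ and sends the monomorphism $C\cap D\hookrightarrow C$ to a levelwise cofibration, so that $\cconfig(M|C\cup D)$ is the homotopy pushout of $\cconfig(M|C)\leftarrow\cconfig(M|C\cap D)\to\cconfig(M|D)$. By the pasting law for homotopy pushouts, the square of the proposition is homotopy cocartesian if and only if the square with the same top row and bottom row $\cconfig(M|C\cap D)\to\cconfig(M|D)$ is; and this last square involves only the small simplicial subsets $D$ and $C\cap D$ of $\FF$, independently of $C$.

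For this local square I would run an induction on $r+s$, aiming to show that both vertical maps are Segal equivalences: $\partial(\Delta[r]\times\Delta[s])\otimes^{\LL}R(\sD)\to\cconfig(M|C\cap D)$ and $(\Delta[r]\times\Delta[s])\otimes^{\LL}R(\sD)\to\cconfig(M|D)$. Once that is known, since in addition the two horizontal maps are cofibrations (the top one projectively, the bottom one levelwise), left-properness of the Segal model structure forces the square to be homotopy cocartesian. For the first map one decomposes $\partial(\Delta[r]\times\Delta[s])$ along the faces of the prism, uses that the restriction maps $R(\sD)\to R(\sD|_{\mathrm{face}})$ are weak equivalences — extending a realization across a deleted row or column is a contractible choice, the same rigidity that underlies the parenthetical remark in notation \ref{not:diagrams}(iii) — patches via Mayer–Vietoris using the inductive hypothesis, and finally observes that passing from $\cconfig(M|\phi(\partial(\Delta[r]\times\Delta[s])))$ up to $\cconfig(M|C\cap D)$ only adjoins epi–mono factorizations of simplices already present, which is a Segal equivalence by proposition \ref{prop:epimono}. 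The second map is handled by the same ingredients (notation \ref{not:diagrams}(iii) together with proposition \ref{prop:epimono}), which identify $\cconfig(M|D)$ with the free Segal space on $[r]^{\op}\times[s]^{\op}$ tensored with $R(\sD)$; the compatibility of both vertical maps with the horizontals is automatic from the construction of $R(\sD)$ as the space of realizations of the whole of $\sD$.
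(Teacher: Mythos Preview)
Your reduction to the ``local square'' with bottom row $\cconfig(M|C\cap D)\to\cconfig(M|D)$ is reasonable, but the inductive argument you sketch for the vertical maps does not go through. The crucial error is the claim that the restriction maps $R(\sD)\to R(\sD|_{\mathrm{face}})$ are weak equivalences. They are not. Deleting, say, an inner column~$i$ from~$\sD$ corresponds (via the identification of $R(\sD)$ with components of $\cconfig(M)_{r+s}$) to the face operator that composes two adjacent injections $S_{i-1,s}\leftarrowtail S_{i,s}\leftarrowtail S_{i+1,s}$; the homotopy fibre of this operator is the space of intermediate configurations factoring a given injection, which is typically disconnected and certainly not contractible (take $r=2$, $s=0$, $|S_0|=3$, $|S_1|=2$, $|S_2|=1$). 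The rigidity underlying notation~\ref{not:diagrams}(iii) concerns extending a realization of $\sD^\natural$ --- i.e.\ of the \emph{maximal shuffle} --- to all of~$\sD$, which is an entirely different matter from extending a realization of a codimension-one face. With this claim gone, the Mayer--Vietoris patching for the first vertical map collapses, and the ``free Segal space'' identification offered for the second vertical map is essentially the assertion to be proved.

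The paper's argument avoids this by never comparing $R(\sD)$ with $R$ of a smaller diagram. Instead it filters $C\cup D$ over~$C$ by adding the images $\bar\sigma$ of the $(r,s)$-shuffles one at a time in an order compatible with the shuffle poset. For each non-maximal shuffle the passage is shown to be a pushout along an inner generalized horn inclusion (lemmas~\ref{lem-Ainneranodyne} and~\ref{lem-conAinneranodynereal}), hence a Segal weak equivalence; this step needs genuine care because the map $\Delta[r]\times\Delta[s]\to\FF$ is not injective, and the proof of lemma~\ref{lem-conAinneranodynereal} spends most of its effort (items (a)--(d), (ii), (iii)) controlling exactly which faces of a shuffle can fall outside~$C$ and showing they remain distinct in~$\FF$. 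Only the maximal shuffle contributes a cell attachment (lemma~\ref{lem-addspherereal}), and that is what produces the pushout square of the proposition. Your appeal to proposition~\ref{prop:epimono} for the passage from $\cconfig(M|\phi(\partial))$ to $\cconfig(M|C\cap D)$ is pointing at the right phenomenon, but turning it into an actual Segal equivalence is precisely the content of the inner-horn analysis you have not supplied.
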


In order to prove the proposition, we will require some preparatory results. First an easy one that which will be
used repeatedly.

\begin{lem}\label{lem-coexcisive}
The assignment $C \mapsto \cconfig(M|C)$ is co-excisive, i.e., it sends unions of simplicial subsets of $\FF$ to
(degreewise) homotopy pushouts of simplicial spaces. \qed
\end{lem}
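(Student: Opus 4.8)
The plan is to reduce the statement to a degreewise observation about \emph{discrete} index sets. Since both ``co-excisive'' and ``(degreewise) homotopy pushout'' are properties checked one simplicial degree at a time, fix $n\ge 0$ and examine $\cconfig(M|C)_n$ for a simplicial subset $C\subset\FF$. By notation~\ref{not:diagrams}~(v) this space is the preimage of $C_n\subset\FF_n$ under the degree-$n$ part $p_n\co\cconfig(M)_n\to\FF_n$ of the reference map. The point is that $\FF_n$ is a discrete set, so $p_n$ presents $\cconfig(M)_n$ as a topological disjoint union $\coprod_{z\in\FF_n}p_n^{-1}(z)$ of clopen pieces, and for every simplicial subset $C$ one has $\cconfig(M|C)_n=\coprod_{z\in C_n}p_n^{-1}(z)$, a sub-coproduct.

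Now take simplicial subsets $C_1,C_2\subset\FF$ and set $C_0=C_1\cap C_2$, $C=C_1\cup C_2$; in each degree these satisfy $(C_0)_n=(C_1)_n\cap(C_2)_n$ and $C_n=(C_1)_n\cup(C_2)_n$ inside the discrete set $\FF_n$. Passing to sub-coproducts of the fibres $p_n^{-1}(z)$, one obtains that $\cconfig(M|C_1)_n$ and $\cconfig(M|C_2)_n$ are clopen subspaces of $\cconfig(M|C)_n$ whose union is all of $\cconfig(M|C)_n$ and whose intersection is $\cconfig(M|C_0)_n$. Hence the square
\[
\xymatrix@R=14pt{ \cconfig(M|C_0)_n \ar[r]\ar[d] & \cconfig(M|C_1)_n \ar[d] \\ \cconfig(M|C_2)_n \ar[r] & \cconfig(M|C)_n }
\]
is a pushout of spaces, and it is a homotopy pushout: the two maps out of $\cconfig(M|C_0)_n$ are inclusions of coproduct summands, so the strict pushout is canonically homotopy equivalent to the double mapping cylinder (equivalently, we have an open cover of $\cconfig(M|C)_n$ with the expected pairwise intersection). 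Assembling over all $n$ yields the asserted degreewise homotopy pushout, i.e.\ co-excisiveness.

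There is no real obstacle here; the only thing to be careful about is recording that $\FF_n$ is discrete, which is exactly what makes the preimage construction split as a disjoint union. The same reasoning shows, more generally, that $C\mapsto\cconfig(M|C)$ sends a filtered union of simplicial subsets to the corresponding degreewise homotopy colimit, since a filtered colimit of summand inclusions of spaces is a homotopy colimit.
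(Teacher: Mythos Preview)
Your argument is correct and is exactly the verification the paper has in mind: the lemma carries a \qed\ with no proof, so the authors regard it as immediate from the fact that $\FF$ is degreewise discrete. Your write-up spells out precisely that point --- each $\cconfig(M|C)_n$ is a sub-coproduct of $\coprod_{z\in\FF_n}p_n^{-1}(z)$, so union-intersection squares of simplicial subsets go to strict pushout squares of summand inclusions, which are automatically homotopy pushouts.
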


\subsection{Products of simplices}
The simplicial set $\Delta[r] \times \Delta[s]$ has a well-known description
in terms of simplicial subsets isomorphic to $\Delta[r+s]$. Namely,
\[
\Delta[r] \times \Delta[s] \; = \; \bigcup_{\sigma} \Delta[r+s]_{\sigma}
\]
where $\sigma$ runs over the $(r,s)$-shuffles. Recall that a $(r,s)$-shuffle $\sigma$ is an order preserving monomorphism
$[r+s] \hookrightarrow [r] \times [s]$, with the product (partial) ordering on $[r]\times[s]$.
Injectivity implies that $\sigma(0) = (0,0)$, that $\sigma(r+s) = (r,s)$ and
that, for each $k$, the pair $\sigma(k+1)$ is equal to $\sigma(k)+(1,0)$ or to $\sigma(k)+(0,1)$. (Think snake.)

Shuffles form a poset: given two such, $\sigma$ and $\tau$, we say $\sigma \leq \tau$ if $p_2\circ\sigma\leq p_2\circ\tau$,
where $p_2\co [r]\times[s]\to [s]$ is the (second) projection. This poset has a minimal element (denoted $\lrcorner$)
and a maximal element (denoted $\ulcorner$).

For a shuffle $\sigma$, let $\sA^{\sigma}$ denote the smallest simplicial subset of $\Delta[r] \times \Delta[s]$ containing
the boundary $\partial (\Delta[r] \times \Delta[s])$ and all $(r+s)$-simplices corresponding to shuffles $\tau$
where $\tau<\sigma$.

\medskip
One of the main ingredients in our proof of proposition \ref{prop-addrel} is the following lemma. We postpone its proof
to appendix~\ref{sec-app2}. (That appendix also has a definition of \emph{inner generalized horn extension}.)

\begin{lem}\label{lem-Ainneranodyne}
The inclusion
$\sA^{\sigma} \hookrightarrow \sA^{\sigma} \cup \Delta[r+s]_{\sigma}$
is an inner generalized horn extension if $\sigma$ is not the maximal shuffle. If $\sigma$ is the maximal shuffle,
this inclusion is the homotopy cobase change of $\partial \sigma: \partial \Delta[r+s] \to \sA^\sigma$
along $\partial \Delta[r+s] \hookrightarrow \Delta[r+s]$.
\end{lem}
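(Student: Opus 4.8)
The plan is to compute the simplicial subset $\sA^\sigma \cap \Delta[r+s]_\sigma$ of the top nondegenerate simplex explicitly, and then to observe that $\sA^\sigma \hookrightarrow \sA^\sigma \cup \Delta[r+s]_\sigma$ is the pushout of the inclusion $\sA^\sigma \cap \Delta[r+s]_\sigma \hookrightarrow \Delta[r+s]_\sigma$ along $\sA^\sigma \cap \Delta[r+s]_\sigma \to \sA^\sigma$. Everything then reduces to a combinatorial count of which faces of $\Delta[r+s]_\sigma \cong \Delta[r+s]$ already lie in $\sA^\sigma$.

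First I would encode a shuffle $\sigma$ by the word $w_\sigma \in \{H,V\}^{r+s}$ recording, step by step along the lattice path $\sigma(0) < \sigma(1) < \cdots < \sigma(r+s)$ in $[r]\times[s]$, whether the step increases the first coordinate ($H$) or the second ($V$); so $w_\sigma$ has exactly $r$ letters $H$ and $s$ letters $V$. With the ordering from the text one checks: $\sigma \le \tau$ iff $w_\tau$ arises from $w_\sigma$ by a sequence of transpositions of an adjacent pair $HV \mapsto VH$; the covering relations are exactly the single such transpositions; and the maximal shuffle $\ulcorner$ has word $V^sH^r$. Since $\Delta[r]\times\Delta[s] = N([r]\times[s])$, the simplex $\Delta[r+s]_\sigma$ is the nerve of the chain $C_\sigma = \{\sigma(0) < \cdots < \sigma(r+s)\}$, and for any two shuffles $\Delta[r+s]_\tau \cap \Delta[r+s]_\sigma$ is the face of $\Delta[r+s]_\sigma$ spanned by $C_\sigma \cap C_\tau$ (the nerve sends intersections of sub-posets to intersections).

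The core is then a count of the $(r+s-1)$-faces $d_i\Delta[r+s]_\sigma$, obtained by deleting $\sigma(i)$. (a) For $0 < i < r+s$ the face $d_i\Delta[r+s]_\sigma$ lies in $\partial(\Delta[r]\times\Delta[s])$ exactly when $w_\sigma[i] = w_\sigma[i+1]$ — then the value of $p_1$, resp.\ $p_2$, at $\sigma(i)$ is attained nowhere else on $C_\sigma$, so deleting $\sigma(i)$ destroys surjectivity of that projection — and $d_0\Delta[r+s]_\sigma$, $d_{r+s}\Delta[r+s]_\sigma$ always lie in $\partial(\Delta[r]\times\Delta[s])$. (b) For a covering $\tau \lessdot \sigma$, which is a transposition at positions $(i,i+1)$ with $w_\sigma[i] = V$, $w_\sigma[i+1] = H$, the chains $C_\sigma$ and $C_\tau$ differ only at the $i$-th vertex, so $\Delta[r+s]_\tau \cap \Delta[r+s]_\sigma = d_i\Delta[r+s]_\sigma$. (c) For arbitrary $\tau < \sigma$, the chain $C_\sigma \cap C_\tau$ omits some vertex $\sigma(j)$ with $j \notin D_\sigma$, where $D_\sigma := \{\, i : 1 \le i \le r+s-1,\ w_\sigma[i] = H,\ w_\sigma[i+1] = V \,\}$: taking the least omitted index $j$, one sees $\tau$ agrees with $\sigma$ before $j$ and must deviate at $j$, and were $j \in D_\sigma$ this deviation would force $p_2\circ\tau$ to exceed $p_2\circ\sigma$ at $j$, a contradiction. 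Combining (a)--(c): each $d_j\Delta[r+s]_\sigma$ with $j \notin D_\sigma$ lies in $\sA^\sigma$ (by (a) and (b)), while every simplex of $\sA^\sigma$ contained in $\Delta[r+s]_\sigma$ lies in one of those faces (by (a) and (c)); hence $\sA^\sigma \cap \Delta[r+s]_\sigma$ is precisely the generalized horn in $\Delta[r+s]_\sigma$ missing the faces $d_j$ for $j \in D_\sigma$.

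Finally, $D_\sigma = \emptyset$ iff $w_\sigma$ has no factor $HV$ iff $w_\sigma = V^sH^r$ iff $\sigma = \ulcorner$. If $\sigma \ne \ulcorner$, then $D_\sigma$ is a nonempty set of inner indices (in fact containing no two consecutive integers), so $\sA^\sigma \hookrightarrow \sA^\sigma \cup \Delta[r+s]_\sigma$ is a pushout of the inner generalized horn inclusion $\Lambda^{r+s}_{D_\sigma} \hookrightarrow \Delta[r+s]$, i.e.\ an inner generalized horn extension in the sense of appendix~\ref{sec-app2}. If $\sigma = \ulcorner$, then $D_\sigma = \emptyset$ and $\sA^\sigma \cap \Delta[r+s]_\sigma = \partial\Delta[r+s]_\sigma$ (all proper faces of the top simplex are already present), so the inclusion is the cobase change of $\partial\Delta[r+s] \hookrightarrow \Delta[r+s]$ along $\partial\sigma$, and being a cobase change of a monomorphism of simplicial sets it is automatically a homotopy cobase change. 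The one genuinely delicate point is step (c): it is exactly there that the order relation $p_2\circ\tau \le p_2\circ\sigma$, rather than the mere inequality $\tau \ne \sigma$, has to be used, so I expect this to be the main obstacle.
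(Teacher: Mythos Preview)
Your approach is essentially the paper's: both arguments compute $\sB=\sA^\sigma\cap\Delta[r+s]_\sigma$ and recognize it as a generalized horn, then observe that $0$ and $r+s$ index present faces while some inner index does not. You go further than the paper by identifying the horn exactly --- the missing faces are precisely those indexed by $D_\sigma=\{\,i:w_\sigma[i]=H,\ w_\sigma[i+1]=V\,\}$ --- whereas the paper only checks that every maximal simplex of $\sB$ has codimension one and exhibits one absent inner face.

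There is one place where your justification is too brisk. In the containment
\[
\sA^\sigma\cap\Delta[r+s]_\sigma \;\subset\; \bigcup_{j\notin D_\sigma} d_j\Delta[r+s]_\sigma
\]
you cite (a) for the boundary contribution. But (a) only tells you which \emph{codimension-one} faces of $\Delta[r+s]_\sigma$ lie in $\partial(\Delta[r]\times\Delta[s])$; it does not directly say that an arbitrary simplex of $\partial(\Delta[r]\times\Delta[s])\cap\Delta[r+s]_\sigma$ sits inside some $d_j\sigma$ with $j\notin D_\sigma$. In fact such a simplex need not lie in any codimension-one face that is itself in the boundary: for $w_\sigma=HVHV$ (so $r=s=2$), the face on vertex set $\{0,1,4\}$ lies in $\partial(\Delta[2]\times\Delta[2])$ (it misses $1$ in the second projection), yet neither $d_2\sigma$ nor $d_3\sigma$ --- the only codimension-one faces containing it --- is in the boundary. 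The missing step is short and uses the observation you make in passing, that $D_\sigma$ contains no two consecutive integers: if the complement of the vertex set $T$ is contained in $D_\sigma$, then $0,r+s\in T$, and for each value $a\in[r]$ the fiber $p_1\sigma^{-1}(a)$ meets $T$ (otherwise it would be an interval in $D_\sigma$, but any $j\in D_\sigma$ has $w_\sigma[j+1]=V$, forcing $j+1$ into the same fiber and hence into $D_\sigma$, a contradiction); similarly for $p_2$. So such a face is not in the boundary. With this addition your argument is complete, and your explicit description of the horn is a genuine refinement of what the paper proves.
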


\subsection{Decompositions of $\cconfig(M|D)$ and the proof of proposition \ref{prop-addrel}}

The configuration category analogs of lemma \ref{lem-Ainneranodyne} are the following two lemmas.
A choice of diagram $\sD$ as in~\eqref{eqn-filled} and notation~\ref{not:diagrams} is understood.
This determines a map of simplicial sets $\Delta[r]\times\Delta[s]\to \FF$. Then a choice of $(r,s)$-shuffle
$\sigma$ determines $\sA^{\sigma}\subset \Delta[r]\times\Delta[s]$, and consequently
we can say that $\sA^\sigma$ is a simplicial set over $\FF$.

Let $A^\sigma\subset\FF$ be the image of $\sA^\sigma$  under the map $\Delta[r]\times\Delta[s]\to \FF$ determined by diagram $\sD$.
We may also write $\bar\sigma$ for the simplicial subset of $\FF$ generated by
the image of $\sigma$ in $\FF$.

\begin{lem}\label{lem-conAinneranodynereal}
For $r,s>0$ let $\sigma$ be an $(r,s)$-shuffle which is not the maximal one. Let $C$ be a
saturated simplicial subset of $\FF$ which contains $A^\sigma$.
Then the inclusion
\[
\cconfig(M|C) \to \cconfig(M|C\cup\bar\sigma)
\]
is a weak equivalence in the Segal space model structure. Moreover
$C\cup\bar\sigma$ is again saturated.
\end{lem}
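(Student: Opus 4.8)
The plan is to deduce the statement from its combinatorial model, Lemma~\ref{lem-Ainneranodyne}, by transporting that lemma along the reference map $p\co\cconfig(M)\to\FF$, with Proposition~\ref{prop:epimono} and the Segal property of $\cconfig(M)$ as the engine. Recall that the diagram $\sD$ supplies a map of simplicial sets $\Delta[r]\times\Delta[s]\to\FF$ carrying $\sA^\sigma$ to $A^\sigma$ and $\Delta[r+s]_\sigma$ to $\bar\sigma$. Since $\sigma$ is not the maximal shuffle, Lemma~\ref{lem-Ainneranodyne} identifies $\sA^\sigma\cap\Delta[r+s]_\sigma\hookrightarrow\Delta[r+s]_\sigma$ as a generalized inner horn, i.e.\ $\Delta[r+s]_\sigma$ with the interior simplex and a set of inner faces removed. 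Unwinding the snake combinatorics, the removed faces are precisely the ones indexed by the ``turns'' of $\sigma$ at which the path goes right and then up, and the structure map of such a face at the turn is, read through $p$, a composite (injection)$\circ$(surjection) already in epi-mono position. This is exactly the input that Proposition~\ref{prop:epimono} handles.

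Concretely, I would first reduce to the case $C=A^\sigma$. By co-excisiveness (Lemma~\ref{lem-coexcisive}) the square with corners $\cconfig(M|C\cap\bar\sigma)$, $\cconfig(M|\bar\sigma)$, $\cconfig(M|C)$, $\cconfig(M|C\cup\bar\sigma)$ is a degreewise homotopy pushout of simplicial spaces, hence also a homotopy pushout in the Segal model structure, so it suffices to show that $\cconfig(M|C\cap\bar\sigma)\to\cconfig(M|\bar\sigma)$ is a Segal equivalence; since $A^\sigma\subseteq C$, this differs from the case $C=A^\sigma$ only in some faces of $\sigma$, which get absorbed into the same filtration argument (and if $\sigma$ itself lies in $C$ there is nothing to prove). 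I would then build $\cconfig(M|\bar\sigma)$ from $\cconfig(M|A^\sigma\cap\bar\sigma)$ by attaching, first, the missing turn faces, each along its boundary (which consists of lower-codimension faces already present), and, last, the interior simplex $\sigma$ along its now-complete boundary. The attachment of each turn face is a homotopy pushout along a Segal-trivial cofibration by a relative form of Proposition~\ref{prop:epimono}, applied parametrically over the remaining structure maps of $\sigma$ via the Segal property of $\cconfig(M)$ (pass to the local $2$-simplex in epi-mono position and back); the final attachment of $\sigma$ is a Segal equivalence because $\cconfig(M)$ satisfies the Segal condition and the relevant matching object has already been filled in by the previous steps. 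Stringing these equivalences together and performing cobase change yields the general statement. The secondary claim that $C\cup\bar\sigma$ is saturated is immediate: every structure map of $\sigma$ is already an injection or a surjection, so $e(\sigma)$ is a degeneracy of $\sigma$, whence $e(z)$ lies in $\bar\sigma$ for every face $z$ of $\sigma$.

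The main obstacle I anticipate is that the map $p\co\Delta[r]\times\Delta[s]\to\FF$ need not be injective, so one cannot literally base-change the inner-anodyne inclusion of Lemma~\ref{lem-Ainneranodyne}: the simplicial subsets $A^\sigma$ and $\bar\sigma$ of $\FF$, and the pulled-back horn, may differ combinatorially from their counterparts upstairs. The remedy is to carry out the filtration and the homotopy-pushout bookkeeping directly inside $\cconfig(M)$, using the nondegeneracy of $\sD$ to check that at each stage the boundary of the simplex being adjoined is already present and that the identifications made by $p$ are compatible with the filtration, so that Proposition~\ref{prop:epimono} and the Segal property can be applied one simplex at a time. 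Getting that filtration and its interaction with $p$ exactly right is the only non-formal point.
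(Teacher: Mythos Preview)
Your plan has a genuine gap in the filtration step. You propose to pass from $\cconfig(M|A^\sigma\cap\bar\sigma)$ to $\cconfig(M|\bar\sigma)$ by first attaching each missing codimension-one face $d_j\sigma$ \emph{along its full boundary}, and then attaching the top simplex $\sigma$ \emph{along its now complete boundary} $\partial\Delta[r+s]$. Neither of these moves is a weak equivalence in the Segal model structure: the inclusions $\partial\Delta[n-1]\hookrightarrow\Delta[n-1]$ and $\partial\Delta[n]\hookrightarrow\Delta[n]$ are generating cofibrations, not inner anodyne maps, and tensoring with $R(\sD)$ does not change that. Proposition~\ref{prop:epimono} does not rescue the situation: it is a statement about a single face operator between \emph{spaces} (that $d_1\co\cconfig(M)_{(f_1,f_2)}\to\cconfig(M)_f$ is a weak equivalence), not a statement that a boundary inclusion of simplicial spaces is Segal-trivial. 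Already in the minimal case $r=s=1$, $\sigma=\lrcorner$, your filtration reads $\Lambda^1[2]\hookrightarrow\partial\Delta[2]\hookrightarrow\Delta[2]$, and neither arrow is a Segal equivalence; only their composite is.

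The paper's proof avoids this by \emph{not} decomposing the horn filling: it shows directly that the square
\[
\xymatrix@R=14pt{
\Lambda^S[r+s] \ar[r]\ar[d] & \Delta[r+s] \ar[d] \\
C \ar[r] & C\cup\bar\sigma
}
\]
is a strict pushout of simplicial sets, where the top row is the inner generalized horn inclusion coming from Lemma~\ref{lem-Ainneranodyne}. Tensoring with $R(\sD)$ and using co-excisiveness then gives the Segal equivalence in one step. The real work---which your outline does not address---is verifying that this square \emph{is} a pushout despite the possible non-injectivity of $\Delta[r]\times\Delta[s]\to\FF$. This requires two combinatorial facts: (ii) no face $\sigma_T$ of $\sigma$ with $\sigma_T\notin\sA^\sigma$ can land in $C$ (this is exactly where saturation of $C$ is used, since $e(\sigma_T)$ recovers $\sigma$), and (iii) distinct such faces have distinct images in $\FF$ (shown by observing that the composed map at each missing vertex is neither injective nor surjective, so one can read off $T$ from the image). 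You correctly flagged the non-injectivity of $p$ as the main obstacle, but the remedy is these two claims, not a cell-by-cell filtration.
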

\proof The idea is that this should follow from lemma~\ref{lem-Ainneranodyne}, but there is a difficulty
because our simplicial map from $\Delta[r]\times\Delta[s]$ to $\FF$ need not be injective.
We assume that $\bar \sigma$ is not contained in $C$, otherwise there is nothing to show. It suffices to show the following:
\begin{itemize}
\item[(i)] there is a (strict) pushout square of simplicial sets and maps
\begin{equation*}
\xymatrix@R=16pt{
\Lambda^S[r+s] \ar[d] \ar[r] & \Delta[r+s] \ar[d] \\
C \ar[r]^-{\textup{incl.}} & C\cup \bar\sigma
}
\end{equation*}
where the upper row is an inner generalized horn inclusion.
\end{itemize}
Given (i), it is then is a formal consequence (lemma~\ref{lem-coexcisive}) that we get another homotopy pushout square
\[
\xymatrix@R=16pt{
\Lambda^S[r+s]\otimes^\LL Z \ar[d] \ar[r] & \Delta[r+s]\otimes^\LL Z \ar[d] \\
\cconfig(M|C) \ar[r] & \cconfig(M|C\cup \bar\sigma)
}
\]
where $Z$, also known as $R(\sD)$, is the fiber over $\sigma$ of $\cconfig(M)_{r+s}\to \FF_{r+s}$.
Since the top row in this last diagram is a weak equivalence in the model structure of (complete) Segal spaces,
the same is true for the bottom row, finishing the proof.

To establish (i), we rely on the ``known'' pushout square
of lemma~\ref{lem-Ainneranodyne},
\[
\xymatrix@R=16pt@C=50pt{
\Lambda^S[r+s] \ar[d] \ar[r] & \Delta[r+s] \ar[d] \\
\sA^\sigma \ar[r] & \sA^\sigma\cup \Delta[r+s]_\sigma
}
\]
which is a union-intersection square of simplicial subsets of $\Delta[r]\times\Delta[s]$.
The generalized horn $\Lambda^S[r+s]$ here is the same as in the pushout square of (i).

We begin by collecting some properties
of faces of $\Delta[r+s]_\sigma$ which are not in
$\sA^\sigma\subset \Delta[r]\times\Delta[s]$. The faces of $\sigma$
correspond to nonempty subsets $T$ of $[r+s]$; we write $\sigma_T$ for the face corresponding to $T$.
We call $x\in [r+s]\smin [0]$ \emph{horizontal}, resp.~\emph{vertical}, if $\sigma(x)-\sigma(x-1)=(1,0)$,
resp.~$\sigma(x)-\sigma(x-1)=(0,1)$.
\begin{itemize}
\item[a)] $\sigma_T\notin \sA^\sigma$ implies $0\in T$ and $r+s\in T$.
If $x\in [r+s]\smin T$, then $x-1\in T$ and $x+1\in T$ and $\sigma(x+1)-\sigma(x-1)=(1,1)$.
\end{itemize}
Otherwise the face corresponding to $T$ is contained in $\partial(\Delta[s]\times\Delta[r])$, which is contained in $\sA^\sigma$
by definition.
\begin{itemize}
\item[b)] In the notation of a), the element $x$ is horizontal and $x+1$ is vertical.
\end{itemize}
Otherwise $\sigma_T$ is a face of some shuffle $\tau$ which is $<\sigma$.
(Suppose that $x$ is vertical for a contradiction and define $\tau(x)=\sigma(x)+(1,-1)$ for that $x$ only,
and $\tau(y)=\sigma(y)$ for all $y\ne x$.) --- This has the following consequence:
\begin{itemize}
\item[(ii)] If $\sigma_T\notin \sA^\sigma$, then it cannot land in $C\subset \FF$.
\end{itemize}
Suppose it does land in $C$; then, since $C$ is saturated, $\sigma$ itself must land in $C$, contradiction. --- It remains to
show:
\begin{itemize}
\item[(iii)] Two distinct $k$-dimensional faces of $\sigma$ which are not in $\sA^\sigma$
have distinct images in $\FF$ (under the map determined by $\sD$).
\end{itemize}
We need two more observations for the proof of (iii).
\begin{itemize}
\item[c)] $\sigma$ maps to a nondegenerate element of $\FF_{r+s}$.
\end{itemize}
Suppose for a contradiction that it maps to a degenerate element. Then for some $x\in [r+s]\smin[0]$
the morphism in $\sD$ with source position $\sigma(x)$ and target position $\sigma(x-1)$ is bijective.
If $x=1$, it follows that $\sigma$ is (in $\FF$) a degeneracy of $\sigma_{[r+s]\smin[0]}$. But
$\sigma_{[r+s]\smin[0]}$ violates a), so that it belongs to $\sA^\sigma$, which would mean that $\sigma$ lands in $C$;
contradiction. A similar argument shows that $x=r+s$ can be excluded. If $1<x<r+s$, then $\sigma$ is (in $\FF$)
a degeneracy of $\sigma_{[r+s]-\{x\}}$ and also of $\sigma_{[r+s]-\{x-1\}}$. But one of these two must
violate b), so that one of them belongs to $\sA^\sigma$ and consequently lands in $C$. Then again $\sigma$ lands in $C$,
contradiction.
\begin{itemize}
\item[d)] If $\sigma_T\notin \sA^\sigma$, and $\sigma_T$ is a proper face
of $\sigma$, then $\sigma_T$ maps to a nondegenerate element of $\FF_{|T|-1}$.
\end{itemize}
For suppose that it maps to a degenerate element. Then for some $x\in [r+s]\smin T$, as in a),
the morphism in diagram $\sD$ with source position $\sigma(x+1)$ and target position $\sigma(x-1)$
is bijective; here we use a) and c). In $\FF$, the image of $\sigma_T$ is a degeneracy of the image of $\sigma_{T\smin\{x+1\}}$.
But $\sigma_{T\smin\{x+1\}}$ violates condition a). Therefore it belongs to $\sA^\sigma$, and so $\sigma_T$
itself has to land in $C$. This contradicts (ii), which we have proved.

Now it is easy to prove (iii). The simplex $\sigma$ determines a sub-diagram of $\sD$ which is a
string of $r+s$ composable maps $f_x$ of finite sets:
\[  P_0 \xleftarrow{f_1} P_1 \xleftarrow{f_2} P_2 \cdots P_{r+s-1} \xleftarrow{f_{r+s}} P_{r+s}. \]
Each of the $f_x$ is either a surjection or an injection, but never a bijection.
The proper faces $\sigma_T$ of $\sigma$ correspond to strings of maps obtained by composing
some of the $f_x$. A proper face $\sigma_T$ is not in $\sA^\sigma$ if and only if, for every $x\notin T$,
the map $f_x$ is injective and $f_{x+1}$ is surjective. The string of maps corresponding to
$\sigma_T$ is obtained by replacing each of the substrings $(f_x,f_{x+1})$ by a single map,
the composition $f_xf_{x+1}$. That map $f_xf_{x+1}$ is neither surjective nor injective. Therefore
we can recover $T$, knowing only the image of $\sigma_T$ in $\FF_{|T|-1}$, by asking which
1-dimensional faces of that element in $\FF_{|T|-1}$ correspond to isomorphism classes
of maps which are surjective or injective. This proves (iii), and completes the proof
of lemma~\ref{lem-conAinneranodynereal} except perhaps for the claim that $C\cup \bar\sigma$
is again saturated. But that is trivial since $A^{\sigma}\subset C$ by assumption. \qed

\begin{lem}\label{lem-addspherereal}
For $r,s>0$ let $\sigma$ be the maximal $(r,s)$-shuffle. Let $C$ be a
saturated simplicial subset of $\FF$ which contains $A^\sigma$, but does not contain
the image of $\sigma$ in $\FF$. Then there is a degreewise
homotopy pushout square of simplicial spaces:
\[
\xymatrix@R=16pt{
\partial \Delta[r+s] \otimes^\LL R(\sD) \ar[r] \ar[d] & \ar[d] \Delta[r+s] \otimes^\LL R(\sD) \\
	\cconfig(M|C) \ar[r]  & \cconfig(M|C\cup\bar\sigma)
}
\]
\end{lem}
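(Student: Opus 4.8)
The plan is to rerun the proof of Lemma~\ref{lem-conAinneranodynereal}, this time feeding in the second clause of Lemma~\ref{lem-Ainneranodyne} --- the maximal shuffle case --- in place of the inner generalized horn. The decisive simplification is that for the maximal $(r,s)$-shuffle $\sigma$ \emph{every} proper face of $\sigma$ already lies in $\sA^\sigma$. Indeed, by Lemma~\ref{lem-Ainneranodyne} the map $\partial\sigma\co\partial\Delta[r+s]\to\sA^\sigma$ is defined; equivalently, in the horizontal/vertical language of the proof of Lemma~\ref{lem-conAinneranodynereal}, for the maximal shuffle every vertical position precedes every horizontal one, so there is no $x$ which is horizontal with $x+1$ vertical, and hence property b) there leaves no proper face of $\sigma$ outside $\sA^\sigma$. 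Since $A^\sigma\subseteq C$, this shows at once that the composite $\partial\Delta[r+s]\xrightarrow{\partial\sigma}\FF$ lands in $C$, and also that $D=A^\sigma\cup\bar\sigma\subseteq C\cup\bar\sigma$, so that $C\cup\bar\sigma=C\cup D$; thus the right-hand term of the asserted square is $\cconfig(M|C\cup D)$, the configuration category with the generator/relation $\sD$ adjoined, which is exactly what is needed for Proposition~\ref{prop-addrel}.

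First I would establish, exactly as in step (i) of the proof of Lemma~\ref{lem-conAinneranodynereal} but now with the full boundary in place of a generalized horn, that
\[
\xymatrix@R=14pt{
\partial\Delta[r+s] \ar[r] \ar[d]_-{\partial\sigma} & \Delta[r+s] \ar[d]^-{\sigma} \\
C \ar[r] & C\cup\bar\sigma
}
\]
is a (strict) pushout square of simplicial sets with the top map a cofibration, hence a homotopy pushout of simplicial sets. The inputs are: all proper faces of $\sigma$ land in $A^\sigma\subseteq C$ (first paragraph); the image $[\sigma]$ of $\sigma$ in $\FF_{r+s}$ does not lie in $C$ (hypothesis); $[\sigma]$ is nondegenerate in $\FF_{r+s}$ (property c) of the proof of Lemma~\ref{lem-conAinneranodynereal}; for the maximal shuffle one can also just note that the $\sigma$-string consists of $s$ non-bijective surjections followed by $r$ non-bijective injections, so cardinalities are strictly monotone along each half); and, by Eilenberg--Zilber uniqueness, the only simplices of $\Delta[r+s]$ mapping to $[\sigma]$ or to a degeneracy of it are $\id_{[r+s]}$ and the corresponding degeneracy. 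Together these say that attaching the $(r+s)$-cell $\Delta[r+s]_\sigma$ along $\partial\sigma$ adjoins to $C$ precisely $[\sigma]$ together with its degeneracies and nothing else, so that $C\cup_{\partial\Delta[r+s]}\Delta[r+s]=C\cup\bar\sigma$ and $C\cap\bar\sigma=\partial\bar\sigma$, where $\partial\bar\sigma$ denotes the image of $\partial\Delta[r+s]$ in $\FF$.

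Finally I would pass to configuration categories, which --- as in the proof of Lemma~\ref{lem-conAinneranodynereal} --- is a formal consequence of co-excision (Lemma~\ref{lem-coexcisive}) together with the identification of $R(\sD)$ with the fiber of $\cconfig(M)_{r+s}\to\FF_{r+s}$ over $[\sigma]$ (Notation~\ref{not:diagrams} (iii) and Proposition~\ref{prop:epimono}). Concretely: Lemma~\ref{lem-coexcisive} exhibits $\cconfig(M|C\cup\bar\sigma)$ as the degreewise homotopy pushout of the span $\cconfig(M|C)\leftarrow\cconfig(M|\partial\bar\sigma)\to\cconfig(M|\bar\sigma)$ (using $C\cap\bar\sigma=\partial\bar\sigma$), and it then suffices to check that the tautological square
\[
\xymatrix@R=14pt{
\partial\Delta[r+s]\otimes^\LL R(\sD) \ar[r] \ar[d] & \Delta[r+s]\otimes^\LL R(\sD) \ar[d] \\
\cconfig(M|\partial\bar\sigma) \ar[r] & \cconfig(M|\bar\sigma)
}
\]
is a degreewise homotopy pushout; pasting it onto the co-excision square yields the statement. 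In degree $n$ this last check reduces, by the maximal shuffle analysis, to the observation that the $n$-simplices of $\bar\sigma$ not already in $\partial\bar\sigma$ are exactly $[\sigma]$ (for $n=r+s$) and its degeneracies (for $n>r+s$), each the image of a single $n$-simplex of $\Delta[r+s]$, with fiber weakly equivalent to $R(\sD)$ via the tautological map. I expect this transport to be the main obstacle, because the map $\Delta[r]\times\Delta[s]\to\FF$ determined by $\sD$ is typically not injective and $\FF$ is not a Segal space: controlling the preimage of a simplicial subset of $\FF$ in $\cconfig(M)$ and comparing it with the free construction $K\otimes^\LL R(\sD)$ forces one to re-run, for the maximal shuffle, the nondegeneracy and injectivity bookkeeping (properties a)--d) and (iii) of the proof of Lemma~\ref{lem-conAinneranodynereal}) and to invoke Proposition~\ref{prop:epimono} so that every composite of consecutive arrows occurring in $\sD$, and hence in $\bar\sigma$, is generic and $R(\sD)$ really governs $\cconfig(M|\bar\sigma)$ relative to its boundary. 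The one structural difference from the non-maximal case treated in Lemma~\ref{lem-conAinneranodynereal} is that here the cell is attached along the genuine boundary $\partial\Delta[r+s]$ rather than along an inner generalized horn, which is why the conclusion is a genuine homotopy pushout and not a weak equivalence.
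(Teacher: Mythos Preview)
Your proof is correct and follows essentially the same two-step approach as the paper: first a strict pushout square of simplicial sets (via the maximal-shuffle clause of Lemma~\ref{lem-Ainneranodyne} and nondegeneracy of $[\sigma]$), then an appeal to co-excision. The paper's version is only two sentences, writing ``obvious'' for your second paragraph and ``formal consequence'' for your third.

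One comment: your last paragraph is overcautious. Because for the maximal shuffle \emph{every} proper face of $\sigma$ lies in $\sA^\sigma$, the injectivity bookkeeping you propose to redo --- properties (ii), (iii), d) from the proof of Lemma~\ref{lem-conAinneranodynereal} --- becomes vacuous here: there are no proper faces outside $\sA^\sigma$ whose images in $\FF$ could collide or land in $C$. The only nontrivial residual check is nondegeneracy of $[\sigma]$, which you already handled in your second paragraph. Your invocation of Proposition~\ref{prop:epimono} at the end is also unnecessary for this lemma; the identification of $R(\sD)$ with the fiber over $[\sigma]$ is what you need, and that is Notation~\ref{not:diagrams}(iii). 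The intermediate square with $\cconfig(M|\partial\bar\sigma)$ and $\cconfig(M|\bar\sigma)$ is a valid route but not required --- one can go directly from the strict pushout of simplicial sets to the asserted square, as the paper does.
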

\proof  Our assumptions make it obvious that there is
a (strict) pushout square of simplicial sets and maps
\[
\xymatrix@R=16pt{
\partial\Delta[r+s] \ar[d] \ar[r] & \Delta[r+s] \ar[d] \\
C \ar[r]^-{\textup{incl.}} & C\cup \bar\sigma
}
\]
where the top left-hand term $\partial\Delta[r+s]$ is explained by lemma~\ref{lem-Ainneranodyne}.
It is a formal consequence that we get a homotopy
pushout square
\[
\xymatrix@R=16pt{
\partial\Delta[r+s]\otimes^\LL Z \ar[d] \ar[r] & \Delta[r+s]\otimes^\LL Z \ar[d] \\
\cconfig(M|C) \ar[r] & \cconfig(M|C\cup \bar\sigma)
}
\]
where $Z$, also known as $R(\sD)$, is the fiber over $\sigma$ of $\cconfig(M)_{r+s}\to \FF_{r+s}$. \qed

\proof[Proof of proposition \ref{prop-addrel}]
Choose an enumeration of the set of $(r,s)$-shuffles, say
$i\mapsto \sigma_i$ where $i\in J:=\{1,2,\dots,\binom{r+s}{s}\}$, such that $\sigma_i<\sigma_j$ only if $i<j$.
Let
\[  B^i:= A^{\sigma_1}\cup A^{\sigma_2}\cup \cdots \cup A^{\sigma_i} \subset \FF. \]
All the $B^i$ are saturated because all the $A^j$ are saturated.
By lemma~\ref{lem-conAinneranodynereal}, the inclusion
\[  \cconfig(M|C\cup B^i) \hookrightarrow \cconfig(M|C\cup B^{i+1})  \]
is a weak equivalence in the Segal space model structure if $i,i+1\in J$.
By lemma~\ref{lem-addspherereal}, for the maximal element
$i$ of $J$ the inclusion
\[  \cconfig(M|C\cup B^{i}) \lra \cconfig(M|C\cup D)  \]
fits into a degreewise homotopy pushout square of simplicial spaces
\[
\xymatrix@R=18pt{
\partial \Delta[r+s] \otimes^\LL R(\sD) \ar[r] \ar[d] & \Delta[r+s] \otimes^\LL R(\sD) \ar[d] \\
	\cconfig(M|C\cup B^i) \ar[r] & \cconfig(M|C\cup D)
}
\]
which uses $B^i\cup \bar\sigma_i=D$. (For the maximal $i\in J$ we have
$B^i=A^{\sigma_i}$.) \qed

\proof[Proof of theorem \ref{thm-hocoend}] Choose an enumeration, say $i\mapsto \sD_i$ where $i\in \NN$,
of the isomorphism types of diagrams $\sD$ like~\eqref{eqn-filled} such that $D=\im(\sD)\subset C$ and $\sD^\natural$ is nondegenerate. We impose
some conditions on the enumeration. First condition,
$D_i=\im(\sD_i)$ is not contained in the union of the $D_j$ where $j<i$; second condition, the ``boundary''
of $D_i$ is contained in the union of the $D_j$ where $j<i$. (The boundary of $D_i$ is the simplicial
subset of $\FF$ which is the image of the composition $\partial(\Delta[r]\times\Delta[s])\hookrightarrow
\Delta[r]\times\Delta[s]\to \FF$, second arrow determined by $D_i$, assuming this has size $r\times s$.)
Let
\[ C_i=\bigcup_{j\le i} D_j~, \qquad
C_{<i}=\bigcup_{j< i} D_j\,. \]
Then $\bigcup_i C_i=C\subset\FF$.
It is not hard to see that there is a homotopy pushout square (degreewise, in the category of simplicial spaces)
\[
\xymatrix{
\partial (\Delta[r] \times \Delta[s]) \otimes^\LL R(\sD_i) \ar[r] \ar[d] & \ar[d]
\Delta[r] \times \Delta[s] \otimes^\LL R(\sD_i)  \\
	\Delta[-] \times \Delta[-] \otimes^\LL_{\Delta \times \Delta} R^{C_{<i}} \ar[r] & \Delta[-] \times \Delta[-]
\otimes^\LL_{\Delta \times \Delta} R^{C_i}
}
\]

Comparison with proposition \ref{prop-addrel}
completes the proof, by induction on $i$ and passage to the colimit, $i\to \infty$. \qed

\section{Defects} \label{sec-def}
\begin{defn} The \emph{defect} of a diagram of finite sets and maps
\begin{equation} \label{eqn-defect}
\xymatrix{
S_0 & \ar[l]_-{f_1}  S_1 & \ar[l]_-{f_2}  S_2 & \ar[l]  \cdots & \ar[l]_-{f_{t-1}}  S_{t-1} & \ar[l]_-{f_t}  S_t
}
\end{equation}
is the non-negative integer
$\sum_{i=0}^t |S_i| - \sum_{i=1}^t |\im(f_i)|$.
We can think of  \emph{defect} as a degreewise function on the simplicial set
$\FF$.
\end{defn}

\begin{lem} The defect does not increase when face operators are applied in the simplicial set $\FF$.
It is unchanged when degeneracy operators are applied. \qed
\end{lem}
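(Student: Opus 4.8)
The plan is elementary: unwind the effect of the simplicial operators on a representing diagram and check the defect inequality in each case. Recall that a $t$-simplex of $\FF$ is an isomorphism class of diagrams as in \eqref{eqn-defect}, i.e.\ a string $S_0 \xleftarrow{f_1} S_1 \xleftarrow{f_2} \cdots \xleftarrow{f_t} S_t$ of composable maps of finite sets, and that the simplicial operators act on such representatives exactly as in the nerve: $d_0$ deletes $S_0$ and $f_1$, $d_t$ deletes $S_t$ and $f_t$, $d_j$ for $0<j<t$ deletes $S_j$ and replaces the two incident maps $f_j,f_{j+1}$ by the composite $f_j f_{j+1}$, and $s_j$ inserts an identity map $\id_{S_j}$ at position $j$. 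Since the defect depends only on the cardinalities $|S_i|$ and $|\im f_i|$, it is well defined on isomorphism classes, so it is enough to treat these generating operators.

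I would dispose of the easy cases first. For $s_j$, inserting $\id_{S_j}$ adds a summand $|S_j|$ to $\sum_i|S_i|$ and a summand $|\im\id_{S_j}|=|S_j|$ to $\sum_i|\im f_i|$, so the defect is literally unchanged; this proves the degeneracy assertion. For $d_0$ the defect decreases by $|S_0|-|\im f_1|$, which is $\ge 0$ because $\im f_1\subseteq S_0$; symmetrically, for $d_t$ it decreases by $|S_t|-|\im f_t|\ge 0$ because $|\im f_t|\le|S_t|$. Hence the outer faces cannot raise the defect.

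The one case with content is an inner face $d_j$ ($0<j<t$). A short bookkeeping computation shows that the defect changes by $-|S_j|+|\im f_j|+|\im f_{j+1}|-|\im(f_j f_{j+1})|$, so the claim is equivalent to the inequality
\[
|\im f_j|+|\im f_{j+1}|\ \le\ |S_j|+|\im(f_j f_{j+1})|.
\]
Writing $A=S_j$, $g=f_j$, and $A'=\im f_{j+1}\subseteq S_j$, and noting $\im f_j=g(A)$ and $\im(f_j f_{j+1})=g(A')$, this is precisely $|g(A)|-|g(A')|\le|A|-|A'|$. That inequality — which I take to be the only real point, and hence the ``main obstacle'', though it is genuinely easy — holds because $g(A')\subseteq g(A)$ and, choosing one preimage $a_b$ for each $b\in g(A)\smin g(A')$, the assignment $b\mapsto a_b$ is an injection into $A\smin A'$ (indeed $a_b\notin A'$, else $b\in g(A')$, and distinct $b$ give distinct $a_b$); thus $|g(A)\smin g(A')|\le|A\smin A'|=|A|-|A'|$. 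Assembling the four cases proves the lemma. (This also recovers the non-negativity of the defect asserted in the definition: iterating face operators carries any $t$-simplex down to a vertex $S_0$, of defect $|S_0|\ge 0$.)
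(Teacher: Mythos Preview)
Your argument is correct in every detail. The paper itself gives no proof at all (the lemma is marked \qed\ immediately after the statement), so there is nothing to compare against; your elementary case analysis on the generating simplicial operators, with the one substantive inequality $|g(A)|-|g(A')|\le|A|-|A'|$ for $A'\subseteq A$, is exactly the verification the authors are implicitly leaving to the reader.
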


\begin{cor} \label{cor-defmax} The defect of a diagram such as~\emph{(\ref{eqn-defect})} is not less than the maximum
of the $|S_i|$, where $i=0,1,\dots,t$. \qed
\end{cor}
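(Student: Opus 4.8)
The plan is to reduce the statement immediately to the preceding lemma. Fix $j\in\{0,1,\dots,t\}$. First I would observe that the $j$-th vertex of the simplex $(\ref{eqn-defect})$ is the $0$-simplex of $\FF$ given by the single finite set $S_j$ (a string of length zero, carrying no maps). Reading off the defining formula for the defect on a length-zero string, the sum $\sum_{i=0}^{0}|S_i|$ contributes the single term $|S_j|$ and there are no images $\im(f_i)$ to subtract, so the defect of this $0$-simplex equals $|S_j|$.

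Next I would note that this vertex is obtained from the $t$-simplex $(\ref{eqn-defect})$ by a finite sequence of face operators in $\FF$: apply the outer face $d_0$ a total of $j$ times to discard $S_0,\dots,S_{j-1}$, and then apply the top face repeatedly to discard $S_t,\dots,S_{j+1}$ (any order of deletions works, and each intermediate object is again a diagram of the shape $(\ref{eqn-defect})$ of smaller length). By the lemma, the defect does not increase under a face operator, so
\[
(\textup{defect of }(\ref{eqn-defect}))\;\ge\;(\textup{defect of its }j\textup{-th vertex})\;=\;|S_j|.
\]
Since $j$ was arbitrary, the defect is $\ge\max_{0\le i\le t}|S_i|$, which is the claim.

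There is essentially no obstacle here; the only points that need checking are the evaluation of the defect formula on a $0$-simplex and the elementary fact that every vertex of a standard simplex is reached from its top cell by iterated face maps, both of which are immediate. (One could instead argue purely combinatorially, using that each $f_i\colon S_i\to S_{i-1}$ satisfies $|\im(f_i)|\le\min(|S_{i-1}|,|S_i|)$ to produce an injection $i\mapsto\phi(i)$ from $\{1,\dots,t\}$ into $\{0,\dots,t\}\smin\{j\}$ with $|\im(f_i)|\le|S_{\phi(i)}|$, whence $\sum_{i=1}^{t}|\im(f_i)|\le\sum_{i\ne j}|S_i|$; but the face-operator reduction is cleaner and fits the preceding development, so that is the one we adopt.)
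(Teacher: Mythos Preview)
Your argument is correct and is exactly the intended one: the paper marks the corollary with \qed\ because it follows immediately from the preceding lemma by restricting to the $j$-th vertex via iterated face operators and observing that a $0$-simplex has defect equal to the cardinality of its single set.
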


\begin{cor} \label{cor-count}
For a positive integer $\alpha$, there are only finitely many isomorphism classes of diagrams like
\emph{(\ref{eqn-defect})} such that the defect is $\le \alpha$ and none of
the arrows are bijective. \qed
\end{cor}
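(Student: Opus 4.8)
The plan is to reduce the statement to a finite count by bounding, in terms of $\alpha$, two quantities: the cardinalities of the sets $S_i$, and the length $t$ of the string (\ref{eqn-defect}). The first bound is immediate from Corollary~\ref{cor-defmax}: if the defect is $\le\alpha$, then $|S_i|\le\alpha$ for every $i\in\{0,1,\dots,t\}$, so each $S_i$ is, up to isomorphism, one of the finitely many sets of cardinality $\le\alpha$.

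For the second bound I would exploit the non-bijectivity hypothesis. For each $i\in\{1,\dots,t\}$ one has $|\im(f_i)|\le|S_{i-1}|$, since $\im(f_i)\subseteq S_{i-1}$, and $|\im(f_i)|\le|S_i|$, since $\im(f_i)$ is a quotient of $S_i$; hence $|S_{i-1}|+|S_i|-2|\im(f_i)|\ge0$, and equality forces $|\im(f_i)|=|S_{i-1}|=|S_i|$, i.e.\ $f_i$ bijective. So if no $f_i$ is bijective, each of these $t$ terms is $\ge1$, and summing them gives
\[
2\cdot(\text{defect})\;=\;|S_0|+|S_t|+\sum_{i=1}^{t}\bigl(|S_{i-1}|+|S_i|-2|\im(f_i)|\bigr)\;\ge\;t,
\]
so that $t\le 2\alpha$ under our hypotheses.

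With $t\le2\alpha$ and every $S_i$ of cardinality $\le\alpha$, a diagram of the form (\ref{eqn-defect}) is determined up to isomorphism by the value of $t$, the isomorphism types of $S_0,\dots,S_t$, and the maps $f_1,\dots,f_t$, each of which ranges over a finite set; hence there are only finitely many isomorphism classes. I do not anticipate a real obstacle: the only substantive point is the elementary inequality $|S_{i-1}|+|S_i|-2|\im(f_i)|\ge1$ for non-bijective $f_i$, and the one thing worth flagging is that the non-bijectivity hypothesis is genuinely needed, since a string of identity maps of a fixed nonempty set has defect equal to its cardinality no matter how long it is.
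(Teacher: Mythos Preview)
Your argument is correct: the identity $2\cdot(\textup{defect})=|S_0|+|S_t|+\sum_{i=1}^t(|S_{i-1}|+|S_i|-2|\im(f_i)|)$ holds, each summand is $\ge 1$ when $f_i$ is not bijective, and together with Corollary~\ref{cor-defmax} this bounds both $t$ and all $|S_i|$ by $2\alpha$ and $\alpha$ respectively. The paper offers no proof beyond a \qed, so your write-up is exactly the kind of routine verification the authors are leaving to the reader; there is nothing to compare.
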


\begin{expl} \label{expl-defct} Suppose that in diagram~(\ref{eqn-defect}) the arrows $f_t,\dots,f_{t+1-r}$ are injective and
the arrows $f_{t-r},\dots,f_1$ are surjective. Then the defect is equal to $|S_{t-r}|$. Or suppose that in diagram~(\ref{eqn-defect}) the arrows $f_t,\dots,f_{t+1-r}$ are surjective and
the arrows $f_{t-r},\dots,f_1$ are injective. Then the defect is equal to $|S_0|+|S_t|$.
\end{expl}

\begin{lem} \label{lem-defdiag}
Let $C$ be the simplicial subset of $\FF$ defined by the condition
\[ \textup{defect }\le \alpha. \]
Then $C$ is accessible. Indeed $C=E^\alpha$; it is the union of all $D=\im(\sD)$
where $\sD$ is a diagram of the form~\eqref{eqn-filled} in which all sets $S_{i,j}$
have cardinality $\le \alpha$.
\end{lem}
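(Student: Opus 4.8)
The plan is to establish the two inclusions $E^\alpha\subseteq C$ and $C\subseteq E^\alpha$ separately. That $C$ is a simplicial subset of $\FF$ at all is immediate from the preceding lemma, the defect being non-increasing under face operators and unchanged by degeneracy operators; and $E^\alpha$ is accessible by construction, so once $C=E^\alpha$ is proved the accessibility of $C$ comes for free.

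For $E^\alpha\subseteq C$ I would first record how a diagram $\sD$ as in~\eqref{eqn-filled} is assembled from its top row and left column. Writing $B:=S_{0,s}$, the composite of the injective arrows along the top row identifies each $S_{a,s}$ with a subset $V_a\subseteq B$, so that $B=V_0\supseteq V_1\supseteq\cdots\supseteq V_r$; the composite of the surjective arrows down the left column identifies each $S_{0,b}$ with a quotient $B/\!\sim_b$, where $\sim_s$ is equality and $\sim_b$ coarsens as $b$ decreases; and commutativity of the squares of $\sD$ forces $S_{a,b}\cong V_a/\!\sim_b$ for all $a,b$. In particular $|S_{a,b}|\le|B|$, so the hypothesis ``all sets have cardinality $\le\alpha$'' is equivalent to $|B|\le\alpha$. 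Now a $t$-simplex $z$ of $D=\im(\sD)$ is the image of a monotone chain in the grid $[r]\times[s]$, which reads off as a string $P_0\to\cdots\to P_t$ with $P_i\cong V_{\alpha_i}/\!\sim_{\beta_i}$ for monotone sequences $\alpha_0\ge\cdots\ge\alpha_t$ and $\beta_0\ge\cdots\ge\beta_t$. The structure map $P_{i-1}\to P_i$ is a surjection (moving $\beta$) followed by an injection (moving $\alpha$), so its image is exactly the set of $\sim_{\beta_i}$-classes in $V_{\alpha_i}$ that meet $V_{\alpha_{i-1}}$; hence $|P_i\smin\im(P_{i-1}\to P_i)|\le|V_{\alpha_i}\smin V_{\alpha_{i-1}}|$, since each missing class has a representative lying in $V_{\alpha_i}$ but not in $V_{\alpha_{i-1}}$. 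Substituting into $\textup{defect}(z)=|P_0|+\sum_{i\ge1}|P_i\smin\im(P_{i-1}\to P_i)|$ and using that the $V_{\alpha_i}$ are nested, the sum telescopes to $\textup{defect}(z)\le|V_{\alpha_0}|+\sum_{i\ge1}|V_{\alpha_i}\smin V_{\alpha_{i-1}}|=|V_{\alpha_t}|\le|B|\le\alpha$, so $z\in C$.

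For $C\subseteq E^\alpha$, since both sides are simplicial subsets and the defect is degeneracy-invariant it suffices to realize a nondegenerate $t$-simplex $w\colon T_0\xrightarrow{g_1}T_1\to\cdots\xrightarrow{g_t}T_t$ with $\textup{defect}(w)\le\alpha$ inside some $D=\im(\sD)$ with $|S_{0,s}|\le\alpha$. I would build such a $\sD$ explicitly with $r=s=t$. Put $I_k:=\im(g_k)\subseteq T_k$ (and $I_0:=\emptyset$), and set $B:=T_0\sqcup\coprod_{k=1}^t(T_k\smin I_k)$ and $V_a:=T_0\sqcup\coprod_{j=1}^{t-a}(T_j\smin I_j)$, so $V_t=T_0$, $V_0=B$, and $|B|=|T_0|+\sum_k|T_k\smin I_k|=\textup{defect}(w)\le\alpha$ immediately from the definition of defect. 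For $0\le b\le t$ let $\psi_{t-b}\colon V_b\to T_{t-b}$ be the identity on the last summand $T_{t-b}\smin I_{t-b}$ and send an element of an earlier summand $T_j$ to its image under $g_{t-b}\circ\cdots\circ g_{j+1}$; let $\sim_b$ be the smallest equivalence relation on $B$ containing $\ker(\psi_{t-b})\subseteq V_b\times V_b$. The routine verifications are that each $\psi_k$ is surjective and that $\psi_k=g_k\circ\psi_{k-1}$ on the smaller domain $V_{t-k+1}$; the latter forces the $\sim_b$ into a refining chain (with $\sim_t$ equality), so that $S_{a,b}:=V_a/\!\sim_b$ does define a diagram of the shape~\eqref{eqn-filled}, with all sets of cardinality $\le|B|\le\alpha$. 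Finally the same identity shows that the image under $\sD$ of the diagonal $t$-simplex of $\Delta[t]\times\Delta[t]$ is, via the isomorphisms $V_b/\!\sim_b\cong T_{t-b}$, exactly the string $w$; hence $w\in D\subseteq E^\alpha$.

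I expect the main obstacle to be the construction in this second inclusion --- arranging the variances so that the nested subsets $(V_a)$ genuinely play the role of the injective top row and the quotients $(B/\!\sim_b)$ the role of the surjective left column, and checking that the maps induced along the diagonal simplex recover $w$ on the nose. The small identities involved, surjectivity of the $\psi_k$ and $\psi_k=g_k\circ\psi_{k-1}$, are elementary but require care with the indices. By contrast the first inclusion is pure bookkeeping once the ``$S_{a,b}=V_a/\!\sim_b$'' description of~\eqref{eqn-filled} is in hand.
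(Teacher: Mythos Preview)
Your argument is correct, and it is in fact more explicit than the paper's own proof. The paper treats the inclusion $E^\alpha\subseteq C$ as essentially evident (relying implicitly on example~\ref{expl-defct} and the fact that defect is non-increasing under faces), whereas you give a direct computation via the telescoping bound; this is a genuine improvement in clarity. For the main inclusion $C\subseteq E^\alpha$, the paper proceeds by first epi-mono factoring each arrow of the given string to obtain an alternating ``staircase'' diagram~\eqref{eqn-staircase} along the anti-diagonal of a $t\times t$ grid, and then simply asserts that this staircase can be completed to a full diagram $\sD$ of shape~\eqref{eqn-filled}, noting in passing that the completion is even unique up to isomorphism if one asks that every shuffle have the same defect. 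Your approach instead builds $\sD$ by hand: the set $B=T_0\sqcup\coprod_k(T_k\smin I_k)$ is a concrete realization of the ``biggest'' set $S_{0,s}$, and the nested $V_a$ together with the kernel-generated $\sim_b$ produce the grid directly, with the original string $w$ appearing along the diagonal. This buys you an explicit witness and avoids any appeal to an abstract existence/uniqueness statement; the paper's route, by contrast, makes the conceptual point that the epi-mono factored form already \emph{is} a piece of a grid, at the cost of leaving the completion step unwritten. Both arguments yield the same grid size $r=s=t$ and the same bound $|S_{0,s}|=\textup{defect}(w)$.

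One cosmetic point: your arrows in the string $T_0\to\cdots\to T_t$ run opposite to the paper's convention in~\eqref{eqn-defect}; this is harmless but worth flagging if you merge with the surrounding text.
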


\proof It is enough to show that every simplex in $\FF$ represented by a
string like~\eqref{eqn-defect} which has defect $\alpha$ (exactly) is contained
in $D=\im(\sD)$ for an appropriate diagram $\sD$ as in~\eqref{eqn-filled}, subject to the
condition $|S_{0,s}|\le \alpha$. The defect does not change if we decompose each $f_i$ into two arrows,
surjection followed by injection. Therefore we may assume without loss that $t$ is even and $f_i$
is injective for $i$ odd, surjective for $i$ even. This gives the following picture (with some relabeling, $S_j$ turns into
$S_{j,j}$):
\begin{equation} \label{eqn-staircase}
\begin{aligned}
\xymatrix@M=7pt@R=14pt@C=18pt{
 &  &  &  & S_{t,t} \\
  & & & S_{t-1,t-1} \ar@{<-<}[r] & S_{t,t-1} \ar@{<<-}[u] \\
 &  & \cdots  \ar@{<-<}[r] & S_{t-1,t-2} \ar@{<<-}[u]  &  \\
 & S_{1,1}  \ar@{<-<}[r]  & \cdots   &  &  \\
S_{0,0} \ar@{<-<}[r] & S_{1,0} \ar@{<<-}[u]  &   &  &
}
\end{aligned}
\end{equation}
Now we need to complete this to a diagram $\sD$ like~\eqref{eqn-filled}, taking $r=s=t$.
Such a $\sD$ is not unique, but it exists and it is also unique up to non-unique isomorphism (i.e., natural
isomorphism of diagrams of sets) if we impose the following interesting condition: for every $(t,t)$-shuffle
$\sigma\co [t]\to [t]\times[t]$, the string
of maps $\sigma^*(\sD)$ has the same defect $\alpha$. \qed

\begin{cor} \label{cor-defdiag} $\FF$ is accessible.  \qed
\end{cor}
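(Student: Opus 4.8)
The plan is to exhibit $\FF$ as a union of the accessible simplicial subsets $E^\alpha$ produced by Lemma~\ref{lem-defdiag}, and then invoke the fact (recorded just after the lemma asserting that accessible implies saturated) that a union of accessible simplicial subsets of $\FF$ is again accessible.

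First I would observe that the defect, viewed as a degreewise function on $\FF$, takes a finite value on every simplex. Indeed, a $t$-simplex of $\FF$ is represented by a string of finite sets and maps as in~\eqref{eqn-defect}, and the quantity $\sum_{i=0}^t|S_i|-\sum_{i=1}^t|\im(f_i)|$ is a non-negative integer depending only on the isomorphism class of that string, hence only on the simplex. Consequently each simplex of $\FF$ lies in the simplicial subset $\{\text{defect}\le\alpha\}$ as soon as $\alpha$ is chosen large enough (e.g.\ $\alpha$ equal to that value of the defect), so that $\FF=\bigcup_{\alpha\ge 1}\{\text{defect}\le\alpha\}$.

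Now by Lemma~\ref{lem-defdiag}, for each $\alpha$ the simplicial subset $\{\text{defect}\le\alpha\}$ coincides with $E^\alpha$ and is accessible. Since a union of accessible simplicial subsets of $\FF$ is accessible, it follows that $\FF$ itself is accessible, which is the claim.

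I do not expect any genuine obstacle here: the substance has already been done in Lemma~\ref{lem-defdiag} — together with Corollary~\ref{cor-defmax}, which is precisely what forces the sets appearing in the relevant diagram $\sD$ to have cardinality $\le\alpha$ — and the only additional point is the elementary finiteness of the defect on a single simplex, which is immediate from the definition.
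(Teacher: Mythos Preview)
Your proposal is correct and matches the paper's intended argument: the corollary is marked with a bare \qed{} because the content is already in Lemma~\ref{lem-defdiag}, and you have spelled out exactly the missing line, namely that every simplex of $\FF$ has finite defect so $\FF=\bigcup_{\alpha}E^\alpha$, whence accessibility follows since unions of accessible subsets are accessible.
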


\begin{prop} \label{prop-def} The inclusion of $\cconfig(M|E^\alpha)$ in the truncated
configuration category $\cconfig(M;\alpha)$ is a weak equivalence in the Segal space model structure.
\end{prop}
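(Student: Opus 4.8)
The plan is to reduce the statement to an assertion about simplicial subsets of $\FF$. First, $\cconfig(M;\alpha)=\cconfig(M|B)$ where $B\subseteq\FF$ is the simplicial subset consisting of those simplices all of whose finite sets have cardinality $\le\alpha$; this is immediate from the definition of the truncated configuration category together with notation~\ref{not:diagrams}(v). We have $E^\alpha\subseteq B$ by corollary~\ref{cor-defmax}, and by lemma~\ref{lem-defdiag} the difference $B\smin E^\alpha$ consists precisely of the simplices of $\FF$ whose sets all have cardinality $\le\alpha$ but whose defect exceeds $\alpha$. Moreover $B$ is saturated, because if a string of finite sets has all sets of cardinality $\le\alpha$ then so does its refinement $e(z)$, each intermediate set $\im(f_i)$ having cardinality $\le\min(|S_{i-1}|,|S_i|)\le\alpha$. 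The proposition follows once the inclusion of simplicial sets $E^\alpha\hookrightarrow B$ is shown to be \emph{inner anodyne}, i.e.\ a transfinite composite of generalized inner horn extensions (appendix~\ref{sec-app2}) realized in $\FF$: indeed $\cconfig(M|-)$ carries such a map to a weak equivalence in the Segal space model structure. For a single such extension $\Lambda^S[n]\hookrightarrow\Delta[n]$ over $\FF$ — possibly not embedded in $\FF$, which is handled exactly as in the proof of lemma~\ref{lem-conAinneranodynereal} — lemma~\ref{lem-coexcisive} produces a pushout square of simplicial spaces whose left-hand column is the tautological map $\cconfig(M|C)\to\cconfig(M|C')$ and whose top map is $\Lambda^S[n]\otimes^\LL Z\to\Delta[n]\otimes^\LL Z$, where $Z$ is the relevant fibre of $\cconfig(M)_n\to\FF_n$; the top map is a weak equivalence in the Segal space model structure, and weak equivalences there are stable under homotopy pushout and transfinite composition.

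It thus remains to build $B$ from $E^\alpha$ by generalized inner horn extensions, and this is where the defect bookkeeping enters. The key structural fact is: the defect never increases under a face map; across an inner face $d_j$ it is preserved unless $f_j$ is surjective and $f_{j+1}$ injective, in which case it may strictly decrease; and the outer faces $d_0$, $d_n$ of a string whose arrows are all surjections or injections — call such a string \emph{reduced} — are again reduced, one step shorter (compare example~\ref{expl-defct}). One adjoins the nondegenerate simplices of $B\smin E^\alpha$ in blocks, each headed by a reduced simplex $w$ of defect $>\alpha$: one adjoins $w$ together with precisely those of its faces which still have defect $>\alpha$ — the \emph{new} faces — along the generalized inner horn $\Lambda^S[n]\hookrightarrow\Delta[n]$ whose missing faces are the new ones. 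Here $S$ is a nonempty set of inner indices: nonemptiness uses example~\ref{expl-defct} together with the fact that no arrow of $w$ is a bijection, while the outer faces $d_0w$, $d_nw$, being shorter reduced strings, are required to have been adjoined at an earlier stage (or to lie in $E^\alpha$), so that $0,n\notin S$; consequently the complement of $\Lambda^S[n]$ in $\Delta[n]$ consists of $w$ together with its new codimension-one faces only. A non-reduced string is a face of its reduced refinement and so is handled inside the block of a reduced one. The base case is reassuring: a morphism $f\co S_1\to S_0$ with $|S_0|,|S_1|\le\alpha$ but $|S_0|+|S_1|-|\im f|>\alpha$ is adjoined together with the nondegenerate $2$-simplex $e(f)=(S_0\hookleftarrow\im f\twoheadleftarrow S_1)$ along $\Lambda^1[2]\hookrightarrow\Delta[2]$, the two faces $S_0\hookleftarrow\im f$ and $\im f\twoheadleftarrow S_1$ lying already in $E^\alpha$.

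The main obstacle is to make the partition of $B\smin E^\alpha$ into blocks precise and to verify that it is consistent — in particular, that when a block headed by $w$ is adjoined its part already present is exactly the asserted generalized inner horn $\Lambda^S[n]$, so that the ``sphere attachment'' step of lemma~\ref{lem-addspherereal} is never invoked. The subtle point is that a new face $d_jw$ has dimension $n-1<n$ and yet must not already be present when $w$ is adjoined; the blocks therefore cannot be ordered merely by dimension, and one is pushed towards a discrete-Morse-style matching: a new reduced simplex one of whose arrows admits a nontrivial factorization into two non-bijective arrows of the same type (which is possible as soon as some arrow changes cardinality by at least two) is adjoined inside the block of the resulting longer reduced simplex, while the remaining ``slowly varying'' reduced simplices head their own blocks; the defect asymmetry recorded above is what makes this matching acyclic and inner. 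Granting such a scheme, the colimit of the resulting tower of weak equivalences $\cconfig(M|E^\alpha)\to\cconfig(M|C_1)\to\cconfig(M|C_2)\to\cdots$ in the Segal space model structure is the asserted weak equivalence $\cconfig(M|E^\alpha)\to\cconfig(M|B)=\cconfig(M;\alpha)$.
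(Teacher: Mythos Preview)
Your overall strategy is sound and is precisely the paper's: write $\cconfig(M;\alpha)=\cconfig(M|B)$, observe that $B$ is saturated and $B\smin E^\alpha$ consists of the nondegenerate simplices of defect $>\alpha$, and build $B$ from $E^\alpha$ by a sequence of parametrized inner horn extensions, each of which is a Segal weak equivalence by lemma~\ref{lem-coexcisive}. The base case you write out (attaching a single non-injective, non-surjective $f$ together with its epi-mono factorization along the standard inner horn of $\Delta[2]$) is exactly right and is, in fact, the template for the general step.

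The gap is that you do not actually construct the matching. You describe an organization into ``blocks headed by reduced simplices'' and then, recognizing that this cannot be ordered by dimension alone, retreat to an unspecified ``discrete-Morse-style matching'' and write ``granting such a scheme''. Your sketch (factor arrows of the same type until all cardinality jumps are $\pm 1$) is not obviously acyclic, and more seriously you have not verified that at each stage the portion of the new simplex already present is a generalized \emph{inner} horn. There is also a notational slip: in the paper's convention $\Lambda^S[n]$ is the union of the faces $d_i$ for $i\in S$, so the outer faces being already present means $0,n\in S$, not $0,n\notin S$ as you write. The paper supplies the missing matching explicitly and it is much simpler than your sketch: for $z\in T$ (nondegenerate, all $|S_i|\le\alpha$, defect $>\alpha$), read from the source end a maximal block of proper injections of length $r$, then a maximal block of proper surjections of length $s$; one has $r+s<t$ since otherwise the defect would be $|S_{t-r}|\le\alpha$. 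Put $z\in T_{II}$ if the next arrow $f_{t-r-s}$ is a proper injection, $z\in T_I$ otherwise, and define $\psi(z)=d_{t-r-s}z$ (compose that injection with the preceding surjection). This is a bijection $T_{II}\to T_I$, the face is inner and defect-preserving, and the required partial order on $T_{II}$ making the process well founded is the pullback of the lexicographic order along $z\mapsto(|z|,r,-s)$. This is the ``scheme'' you are asking to be granted.
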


\proof We may view $\FF$ as a \emph{graded} set (and we use lemma~\ref{lem-defdiag}).
Let $T$ be the part of that graded set consisting of the elements which are represented by diagrams like~(\ref{eqn-defect})
where all $S_i$ have cardinality $\le \alpha$, none of the $f_i$ are bijective and the defect is $>\alpha$.
Our plan is to prove the proposition by showing that $\cconfig(M;\alpha)$ can be obtained from $\cconfig(M|E^\alpha)$ by a sequence
of (parametrized) inner horn extensions. (These are weak equivalences in the Segal space model structure.)
Due to the parametrization, each inner horn extension will add all the connected components of $\cconfig(M;\alpha)$
corresponding to two elements $y, z$ in $T\subset \FF$, both of the same defect and in
adjacent degrees, say $n-1$ and $n$, and so that $y$ is an inner face of $z$,
which means $y=d_j z$ for some $j\in \{1,2,\dots,n-1\}$.
Therefore we have to write $T$ as a disjoint union
\[ T = T_I \sqcup T_{II} \]
and we have to make a bijection $\psi\co T_{II}\to T_{I}$ such that the following conditions are satisfied:
\begin{itemize}
\item[(i)] For $z\in T_{II}$ of degree $n$, we can write $\psi(z)=d_j(z)$ for some $j$ such that $0<j<n$.
(This $j$ will turn out to be unique.)
\item[(ii)] The face operator $d_j$ induces a weak equivalence of spaces
from the part of $\cconfig(M;\alpha)$ projecting to $z$ to the part of $\cconfig(M;\alpha)$ projecting to $\psi(z)$.
\item[(iii)] The defect of $\psi(z)$ is equal to the defect of $z$.
\end{itemize}
(Other conditions will be spelled out later.) Now we describe $T_{I}$ and $T_{II}$. For motivation we recall
example~\ref{expl-defct}.
We can represent every element $z$ of $T$ by a diagram like~(\ref{eqn-defect}) where
\begin{itemize}
\item[] $f_t,f_{t-1},\dots,f_{t+1-r}$ are proper injections (for some $r\ge 0$) whereas $f_{t-r}$ is not injective;
\item[] $f_{t-r},\dots,f_{t+1-r-s}$ are proper surjections (for some $s\ge 0$) whereas $f_{t-r-s}$ is not surjective
\end{itemize}
and $r+s<t$. (If $r+s=t$, then $z$ has defect $\le \alpha$, therefore $z\notin T$, contradiction.)
We say that $z\in T_{II}$ if $f_{t-r-s}$ is (properly) injective, and $z\in T_{I}$ otherwise. This defines the decomposition
of $T$. Note that if $z\in T_{II}$, then $s\ge 1$, for otherwise $f_{t-r}$ must be both injective and non-injective.
We define $\psi\co T_{II} \lra T_{I}$ by $\psi(z)=d_{t-r-s}z$ where $d_{t-r-s}$ is the (inner) face operator which composes
the surjection $f_{t+1-r-s}$ with the injection $f_{t-r-s}$. Then
$\psi$ is bijective. The conditions (i),(ii) and (iii) just above
are satisfied.

Finally we need to put a partial ordering on $T_{II}$ such that the following holds:
\begin{itemize}
\item[(a)] if $z\in T_{II}$ and $d_iz$ is a face of $z$ other than the distinguished one, $d_{t-r-s}z$, then
we have $d_iz<z$ if $d_iz\in T_{II}$ and $\psi^{-1}(d_iz)<z$ if $d_iz\in T_I$.
\end{itemize}
In search of such a partial ordering, we make the following observations, continuing in the
notation of (a).
\begin{itemize}
\item[(b)] If $z\in T_{II}$ and $d_iz\in T_{II}$, then obviously the degree of $d_iz$ is less than the
degree of $z$.
\item[(c)] It can happen that $d_iz\notin T$. In that case $d_iz$ can still be a degeneracy of some
$y\in T$, but then the degree of $y$ is less than the degree of $z$.
\item[(e)] If neither (b) nor (c) applies, then $d_iz\in T_{I}$ and either $i=t-r-s-1$ so that $d_i$ is the face
operator which composes $f_{t-r-s}$ and $f_{t-r-s-1}$ in the description of $z$, or $i=t-r$ and $r>0$, so that $d_i$ is the face
operator which composes the properly surjective $f_{t+1-r}$ with the properly injective $f_{t-r}$.
In the first case $s$ increases as we pass
from $z$ to $\psi^{-1}(d_iz)$ while $r$ is unchanged. In the second case $f_{t-r}f_{t+1-r}$ is neither surjective
nor injective, so that $r$ decreases as we pass from $z$ to $\psi^{-1}(d_iz)$. In both cases the degree
of $z$ is equal to the degree of $\psi^{-1}(d_iz)$.
\end{itemize}
Now define $\varphi\co T_{II}\lra \NN\times\NN\times(\ZZ\smin\NN)$ by $z\mapsto (|z|,r,-s)$. Give $\NN$ and $\ZZ\smin\NN$
the standard total orderings. Use the (resulting) lexicographic ordering on the target of $\varphi$. We obtain a partial
ordering on $T_{II}$ by saying $z<z'$ if and only if $\varphi(z)<\varphi(z')$. This has the property (a). Furthermore,
the preimage under $\varphi$ of an element in $\NN\times\NN\times(\ZZ\smin\NN)$ is finite, since $\pi_0\cconfig(M;\alpha)$
is finite in each degree. \qed

\medskip
\proof[Proof of theorem~\ref{thm-finpres}] This is immediate from proposition~\ref{prop-def} and corollary~\ref{cor-count}. \qed

\begin{rem} The above proof can also be read as follows. Let $X\subset \cfin$ be the preimage
of $E^\alpha\subset\FF$ under the projection. Let $\cfin(\alpha)$ be the full (complete) Segal subspace of
$\cfin$ spanned by the objects of cardinality $\le \alpha$. (In each degree $n$, this is a union of connected
components of $\cfin_n$.) \emph{Then the inclusion
\[   X \hookrightarrow \cfin(\alpha) \]
is a presentation.} For $\alpha\ge 2$ it is not a finite presentation, but by corollary~\ref{cor-count}
it is always finite dimensional in the sense that
$X$ is Reedy cofibrant and $X=\skel_r X$ for some $r\ge 0$ depending on $\alpha$.
(Here $\skel_r$ is the functor from simplicial spaces to simplicial spaces which first restricts to $\Delta_{\le r}$
and then makes a left Kan extension along $\Delta_{\le r}\hookrightarrow \Delta$.) Finally we can re-construct
$\cconfig(M;E^\alpha)$ by means of a homotopy pullback square and strict pullback square
\[
\xymatrix@R=18pt@M=5pt{
\cconfig(M|E^\alpha) \ar[d] \ar[r] & \ar[d]  \cconfig(M;\alpha) \\
X \ar[r] &  \cfin(\alpha)
}
\]
Then it is obvious that $\cconfig(M|E^\alpha)$ is homotopically compact as simplicial space, and also that the upper horizontal arrow is a
presentation.
\end{rem}

\appendix

\section{Products of representable simplicial sets} \label{sec-app2}

In this appendix, we prove lemma \ref{lem-Ainneranodyne}. In preparation for that, we collect some
definitions and known results.

\begin{defn}
Let $S \subset [n]$ be a proper subset. The \emph{generalized horn}
\[ \Lambda^S[n] \subset \Delta[n] \]
is the union of the codimension one faces indexed by $i \in S$, i.e., the union of $(\delta_i)_*(\Delta[n-1])$ for $i\in S$,
where $\delta_i\co [n-1]\to [n]$ is the monotone injection which skips $i\in [n]$.
An \emph{inner generalized horn} is a generalized horn $\Lambda^S[n]$ for which $S$ is not an interval, i.e.,
there exist $s_1,t,s_2\in [n]$ such
that $s_1<t<s_2$ and $s_1,s_2\in S$ while $t \notin S$.
\end{defn}
The usual horns are the $\Lambda^S[n]$ where the complement of $S$ is a singleton.

\begin{lem}\label{lem-inneranodyne} \cite[Prop.2.12]{Joyal08}
Inner generalized horns are inner anodyne.\footnote{Joyal writes \emph{mid anodyne}. His notation for generalized inner horns
is in conflict with ours, but ours is in agreement with lecture notes by Rezk.} \qed
\end{lem}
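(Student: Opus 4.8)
The plan is to prove the statement by a double induction, the outer one on $n$ and the inner one on the number of ``missing'' faces, i.e.\ on the cardinality of $S^c:=[n]\smin S$. Throughout I use that the inner anodyne maps form a weakly saturated class, so that cobase changes and composites of inner anodyne maps are again inner anodyne. The base of the inner induction is the case $|S^c|=1$: if $S^c=\{k\}$, then $\Lambda^S[n]$ is the ordinary horn $\Lambda^k[n]$, and the non-interval hypothesis on $S=[n]\smin\{k\}$ is precisely the condition $0<k<n$. Hence the inclusion is an honest inner horn, inner anodyne by definition. (For $n\le 1$ there are no proper non-interval subsets $S$, so the outer induction starts harmlessly.)

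For the inductive step ($|S^c|\ge 2$) I would attach one missing codimension-one face at a time. Fix $i\in S^c$ and set $T=S\cup\{i\}$, so that $\Lambda^T[n]=\Lambda^S[n]\cup(\delta_i)_*(\Delta[n-1])$. The key geometric computation is the identification of the intersection $\Lambda^S[n]\cap(\delta_i)_*(\Delta[n-1])$. Writing $S':=\delta_i^{-1}(S)=\{j\in[n-1]:\delta_i(j)\in S\}$, a face-by-face check shows this intersection equals $(\delta_i)_*(\Lambda^{S'}[n-1])$: the codimension-two face of $\Delta[n]$ omitting $\{i,\delta_i(j)\}$ lies in $\Lambda^S[n]$ if and only if $\delta_i(j)\in S$, using $i\notin S$. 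This yields a pushout square of simplicial sets
\[
\xymatrix@R=16pt{
\Lambda^{S'}[n-1] \ar[d] \ar[r] & \Delta[n-1] \ar[d] \\
\Lambda^S[n] \ar[r] & \Lambda^T[n]
}
\]
exhibiting $\Lambda^S[n]\hookrightarrow\Lambda^T[n]$ as a cobase change of $\Lambda^{S'}[n-1]\hookrightarrow\Delta[n-1]$. The latter is inner anodyne by the \emph{outer} induction hypothesis (dimension $n-1$) provided $S'$ is a proper non-interval subset of $[n-1]$; then $\Lambda^T[n]\hookrightarrow\Delta[n]$ is inner anodyne by the \emph{inner} induction hypothesis (since $|T^c|=|S^c|-1$) provided $T$ is non-interval. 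Composing the two gives $\Lambda^S[n]\hookrightarrow\Delta[n]$ inner anodyne.

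The combinatorial heart is therefore to choose $i\in S^c$ so that both $T=S\cup\{i\}$ and $S'=\delta_i^{-1}(S)$ stay non-interval. Both conditions follow at once if $S$ admits a gap witness $a<b<c$ with $a,c\in S$, $b\in S^c$ and $b\ne i$: such a triple is again a gap witness for $T$ (as $a,c\in S\subset T$ while $b\notin T$), and, since $a,b,c\ne i$, it pulls back along $\delta_i$ to a gap witness for $S'$. Because $S$ is non-interval it has at least one gap middle $g_0\in S^c$, and because $|S^c|\ge 2$ I may take $i\in S^c\smin\{g_0\}$; then $(a,g_0,c)$ is the required witness. Properness of $S'$ is automatic, since $S'\subsetneq[n-1]$ is equivalent to $T\ne[n]$, i.e.\ to $|S^c|\ge 2$.

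The step I expect to demand the most care is the identification of $\Lambda^S[n]\cap(\delta_i)_*(\Delta[n-1])$ with the generalized horn $(\delta_i)_*(\Lambda^{S'}[n-1])$: this rests on tracking codimension-two faces through the simplicial identities and on the hypothesis $i\notin S$, and a careless indexing here is the likeliest source of error. Everything else is either the formal weak-saturation bookkeeping or the short combinatorial selection of $i$ described above.
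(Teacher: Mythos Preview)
Your argument is correct. The double induction is the standard approach: the identification of the intersection $\Lambda^S[n]\cap(\delta_i)_*(\Delta[n-1])$ with $(\delta_i)_*(\Lambda^{S'}[n-1])$ is right (a simplex of the $i$-th face lies in $\Lambda^S[n]$ iff it misses some $j\in S$, which under $\delta_i$ is exactly the condition of missing some $j'\in S'$), the pushout square follows, and your combinatorial choice of $i\in S^c\smin\{g_0\}$ cleanly guarantees that both $T$ and $S'$ remain non-interval.

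However, note that the paper does \emph{not} prove this lemma at all: the statement carries a citation to Joyal's notes and a terminal \qed, and the paper simply invokes it as a known result. So there is no ``paper's own proof'' to compare against. Your write-up is essentially Joyal's argument (Prop.~2.12 in his CRM notes), which proceeds by exactly this kind of induction on $n$ and on $|S|$, attaching one missing face at a time and recognizing the resulting attaching map as a lower-dimensional generalized inner horn. If anything, your version is slightly more explicit about the selection of $i$ than some standard references.
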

\begin{proof}[Proof of lemma \ref{lem-Ainneranodyne}]
Since inner anodyne maps are closed under cobase change, it is enough to show that the inclusion
\[
\iota : \sA^{\sigma} \cap \Delta[r+s]_{\sigma} \to \Delta[r+s]_{\sigma}
\]
is inner anodyne when $\sigma\ne \ulcorner$.

Let us abbreviate $\sA^{\sigma} \cap \Delta[r+s]_{\sigma}$ as $\sB$. We will show (i) that $\sB$ is a union of $(r+s-1)$-dimensional
simplices and (ii) that there is an \emph{inner} face $d_{t}\sigma$ of $\sigma$ which is not in $\sB$. Condition (i) implies
that $\iota$ is a generalized horn. Condition (ii) implies that $\iota$ is inner:  the faces $d_0 \sigma$ and
$d_{r+s} \sigma$ belong to $B$ since they both belong to the boundary $\partial (\Delta[r] \times \Delta[s])$, and $0 < t < r+s$.
Therefore, once (i) and (ii) are established, lemma \ref{lem-inneranodyne} finishes the proof.

In order to prove that condition (i) holds, we must show that every simplex $\gamma$ of $\sB$ which is not a face of some other
simplex in $\sB$ is of dimension $r+s-1$. We can think of $\gamma$ as an injective order-preserving map,
\[  \gamma\co [m]\to [r+s]. \]
If $\gamma(0)\ne 0$ then $\gamma$ is a face of $d_0 \sigma$. But $d_0 \sigma$ is in $\sB$ since it is in the boundary
$\partial( \Delta[r] \times \Delta[s])$. As $\gamma$ is not a face of some other simplex in $\sB$, this implies
$\gamma = d_0 \sigma$ and so $\gamma$
is of dimension $r+s-1$. Hence, we can assume that $\gamma(0)=0$ and, arguing similarly, that $\gamma(m) = r+s$.

The restriction of $\sigma\co [r+s]\to [r]\times[s]$ to the interval $\{\gamma(k-1),...,\gamma(k)\}\subset [r+s]$
can be thought of as a $(p,q)$-shuffle with $(p,q)=\sigma(\gamma(k))-\sigma(\gamma(k-1))$. We denote this by $\sigma^k$.
Suppose that there is some $k$ such that the shuffle $\sigma^k$ is not minimal.
Then we have $\sigma(z)=(a,b)$, $\sigma(z+1)=(a,b+1)$ and $\sigma(z+2)=(a+1,b+1)$ in $[r]\times[s]$ for some $z$ such that
$\{z,z+1,z+2\}\subset \{\gamma(k-1),...,\gamma(k)\}$, and some $a\in [r]$ and $b\in [s]$.
Now $\gamma$ is a face of $d_{z+1}\sigma$.
But there is a unique $(r,s)$-shuffle $\tau$ with $\tau < \sigma$ and such that
$d_{z+1}\sigma = d_{z+1}\tau$. Then $\tau$ is an $(r+s)$-simplex in $\sA^\sigma$ and so $d_{z+1}\tau=d_{z+1}\sigma$
is an $(r+s-1)$-simplex of $\sA^\sigma\cap\Delta[r+s]_\sigma=\sB$.
Hence $\gamma$, being a face of $d_{z+1}\sigma$, is equal to $d_{z+1}\sigma$ and so has dimension $r+s-1$.

It remains to treat the case where each $\sigma^k$ is minimal. In that case any $(r,s)$-shuffle $\tau$ which
has $\gamma$ as a face is $\ge \sigma$, so that $\gamma$ is not a simplex in $\sA^\sigma$; contradiction.

We move on to condition (ii). Since $\sigma\ne \ulcorner$,
there exists $z\in [0,r+s-2]$ such that $\sigma(z)=(a,b)$, $\sigma(z+1)=(a+1,b)$ and $\sigma(z+2)=(a+1,b+1)$
for some $a\in [r-1]$ and $b\in[s-1]$. Then the face $d_{z+1} \sigma$ is not in $\sA^\sigma$, hence not in $\sB$.
\end{proof}

\end{document}